\numberwithin{equation}{section}
\theoremstyle{plain}
\newtheorem{theorem}{Theorem}
\newtheorem{lemma}[theorem]{Lemma}
\newtheorem{corollary}[theorem]{Corollary}
\theoremstyle{remark}
\newtheorem{assumption}[theorem]{Assumption}
\newtheorem{definition}[theorem]{Definition}
\newtheorem{remark}[theorem]{Remark}
\newtheorem*{fact}{Fact}
\renewcommand{\leq}{\leqslant}
\renewcommand{\geq}{\geqslant}
\newcommand{\bone}{\boldsymbol{1}}
\newcommand{\defeq}{\stackrel{\mbox{\scriptsize \rm def}}{=}}
\newcommand{\R}{\mathbb{R}}
\renewcommand{\L}{\mathbb{L}}
\newcommand{\E}{\mathbb{E}}
\renewcommand{\H}{\mathbb{H}}
\renewcommand{\P}{\mathbb{P}}
\newcommand{\Q}{\mathbb{Q}}
\newcommand{\M}{\mathbb{M}}
\newcommand{\cP}{\mathcal{P}}
\newcommand{\cF}{\mathcal{F}}
\newcommand{\cT}{\mathcal{T}}
\newcommand{\cX}{\mathcal{X}}
\newcommand{\cS}{\mathcal{S}}
\newcommand{\cB}{\mathcal{B}}
\newcommand{\cN}{\mathcal{N}}
\newcommand{\cM}{\mathcal{M}}
\renewcommand{\d}{\mathrm{d}}
\newcommand{\dxi}{\dot{\xi}}
\newcommand{\Supp}{\mathrm{Supp}}
\newcommand{\transp}{\mbox{\rm \tiny T}\,}
\newcommand{\norm}[1]{\ensuremath{\Arrowvert #1 \Arrowvert}}
\newcommand{\bnorm}[1]{\ensuremath{\bigl\Arrowvert #1 \bigr\Arrowvert}}
\newcommand{\IFP}{\mathcal{I}_{\cP}}
\newcommand{\IFPn}{\mathcal{I}_{\cP^{\otimes n}}}
\newcommand{\IFPalt}{\widetilde{\mathcal{I}}_{\cP}}
\newcommand{\IFQ}{\mathcal{I}_{\Q}}
\newcommand{\IFQb}{\mathcal{I}_{\Q_{\theta_0,1}}}
\newcommand{\Leb}{\mathfrak{m}}
\newcommand{\Tr}{\mathop{\mathrm{Tr}}}
\newcommand{\Var}{\mathrm{Var}}
\begin{document}

\begin{frontmatter}
\title{The van Trees inequality \\ in the spirit of H{\'a}jek and Le Cam}
\runtitle{The van Trees inequality in the spirit of H{\'a}jek and Le Cam}

\begin{aug}
\author[A]{\fnms{Elisabeth}~\snm{Gassiat}\ead[label=e1]{elisabeth.gassiat@universite-paris-saclay.fr}}
\and
\author[B]{\fnms{Gilles}~\snm{Stoltz}\ead[label=e2]{gilles.stoltz@universite-paris-saclay.fr}}
\address{Elisabeth Gassiat is Professor
and Gilles Stoltz is CNRS Senior research fellow;
Universit{\'e} Paris-Saclay, CNRS, Laboratoire de
math{\'e}matiques d'Orsay, 91405, Orsay, France\printead[presep={\ }]{e1,e2}.}
\end{aug}

\begin{abstract}
In honor of the 100th birth anniversary of Lucien Le Cam
(November 18, 1924 -- April 24, 2000),
we work out a version of the van Trees inequality in a H{\'a}jek--Le Cam spirit,
i.e., under minimal assumptions that, in particular, involve no direct pointwise regularity
assumptions on densities but rather almost-everywhere differentiability in quadratic mean
of the model. Surprisingly, it suffices that the latter differentiability
holds along canonical directions---not along all directions.
Also, we identify a (slightly stronger) version of the van Trees inequality
as a very instance of a Cram{\'e}r--Rao bound, i.e., the van Trees inequality
is not just a Bayesian analog of the Cram{\'e}r--Rao bound.
We provide, as an illustration, an elementary proof of the
local asymptotic minimax theorem for quadratic loss functions,
again assuming differentiability in quadratic mean only along canonical directions.
\end{abstract}

\begin{keyword}
\kwd{van Trees inequality}
\kwd{Cram{\'e}r--Rao bound}
\kwd{Differentiability in quadratic mean}
\kwd{Local asymptotic minimax theorem}
\end{keyword}

\end{frontmatter}

\section{Introduction}

Every statistician knows about the Cram{\'e}r--Rao inequality
but fewer knew about the van Trees inequality
(\citealp[page~72]{vT68}) before \citet{GiLe95} drew attention
to some of its statistical uses. In their landmark article,
they present the van Trees inequality as offering a Bayesian
Cram{\'e}r--Rao bound, to be applied in cases involving convergence of experiments
to bypass the beautiful but sophisticated H{\'a}jek--Le Cam theory of convergence of experiments.
\citet{GiLe95} derived the van Trees inequality under precise analytic conditions,
involving, in particular, smoothness assumptions on the densities;
so did also later contributions, including the ones by
\citet{Len05}, \citet{Jup10}, and \citet{Let22}.
However, as summarized by \citet{Pol01}, who in turn refers to
\citet[page~12]{Bic93} and \citet[Chapter~12]{LehmannRomano2005},
Le Cam and H{\'a}jek advocated resorting rather to conditions
that are intrinsic; of particular interest, is the concept of
differentiability in quadratic mean of a statistical model.

We provide a version of the van Trees inequality in the spirit
promoted by Le Cam and H{\'a}jek, and aim for
the weakest possible assumptions. In the one-dimensional
case (Section~\ref{sec:vT1-statement}), on top of the assumptions merely ensuring the existence of
the quantities involved in the inequality
(which includes the almost-everywhere differentiability of the model), we only require
that the prior vanishes at finite boundary points
of the parameter space~$\Theta$ (which is an arbitrary, not necessarily bounded, open subset of $\R$),
together with some technical condition on the model that is weaker than its differentiability everywhere.
We discuss these extremely mild assumptions (Section~\ref{sec:comp-classic}) by comparing them to the classic
regularity assumptions proposed by~\citet{GiLe95}.
Our proof (Section~\ref{sec:proof-vt1}) also exploits the same separation
of $x$ and $\theta$ variables as in~\citet{GiLe95}, but we perform
integrations in the reverse order, first over $x$ then over $\theta$,
thus effectively avoiding pointwise regularity assumptions on densities.
It turns out (Section~\ref{sec:CR-joint})
that the van Trees inequality is not only a Bayesian analog of the Cram{\'e}r--Rao bound,
as pointed out by \citet[page~72]{vT68} and \citet{GiLe95},
but that it is exactly, at least in a slightly stronger form,
an instance of a Cram{\'e}r--Rao bound for a suitably chosen location model.

The rest of this contribution focuses on a multivariate version
of the van Trees inequality. We provide (Section~\ref{sec:multivar})
weak conditions that only involve differentiability in quadratic mean
of the model along canonical directions, not all directions.
We illustrate (Section~\ref{sec:LAM-direct}) the application
of this multivariate version to establish
a local asymptotic minimax theorem for quadratic loss functions.

\section{One-dimensional version}

We consider a statistical model $\cP = (\P_\theta)_{\theta \in \Theta}$, defined on a measurable
space $(\cX,\cF)$ and indexed by an open subset $\Theta$ of $\R$ (not necessarily an interval).
We assume that $\cP$ is dominated by a $\sigma$--finite measure~$\mu$, with densities $f_\theta=\d\P_\theta/\d\mu$
such that $(\theta,x) \mapsto f_\theta(x)$ is measurable.
Let $\xi_\theta = \sqrt{f_\theta} \in \L^2(\mu)$ be the square roots of these densities.

In the sequel, $\norm{\cdot}_\mu$ refers to the Euclidean norm in $\L^2(\mu)$, i.e.,
for a function $g : \cX \to \R$ in $\L^2(\mu)$,
\[
\norm{g}_\mu = \sqrt{\int_{\cX} g^2 \,\d\mu}\,.
\]

\begin{definition}[{Differentiability in $\L_2$}]
\label{def:DMQ}
The $\mu$--{do\-mi\-na\-ted} statistical model $\cP$ is differentiable in $\L_2(\mu)$ at
$\theta_0 \in \Theta$ if there exists a function $\dxi_{\theta_0} \in \L_2(\mu)$, called the
$\L_2(\mu)$-derivative of the model at $\theta_0$, such that
\[
\bnorm{\xi_{\theta} - \xi_{\theta_0} - (\theta - \theta_0) \dxi_{\theta_0}}_\mu =
o \bigl( \norm{\theta - \theta_0} \bigr) \ \mbox{as} \ \theta \to \theta_0\,.
\]
The Fisher information $\IFP(\theta_0)$ of the model at $\theta_0$ is then defined
as
\[
\IFP(\theta_0) = 4 \int_\cX \bigl( \dxi_{\theta_0} \bigr)^2 \,\d\mu\,.
\]
\end{definition}

\begin{definition}[Well-behaved prior]
\label{def:well-behaved-1dim}
We call a probability measure $\Q$ that concentrates on the open set~$\Theta \subseteq \R$ a well-behaved prior if
$\Q$ has a density $q$ with respect to the Lebesgue measure on $\Theta$
that is absolutely continuous on $\Theta$, with almost-sure derivative~$q'$
satisfying
\[
\IFQ \defeq \int_\Theta \bigl( q'(\theta) \bigr)^2 \, \frac{\bone_{\{q(\theta)>0\}}}{q(\theta)}\,\d\theta <\infty\,.
\]
We denote by $\Supp(q) = \{ q > 0 \}$ the open support of $q$.
\end{definition}

A standard result (see, e.g., \citealp[Corollary~12.2.1]{LehmannRomano2005})
states that a location model based on a well-behaved prior $\Q$ is differentiable in $\L^2(\lambda)$,
where $\lambda$ denotes the Lebesgue measure,
with derivative at $0$ equal to $q' \bone_{\{q>0\}} / \bigl( 2\sqrt{q} \bigr)$,
and hence, with Fisher information $\IFQ$.

\subsection{Statement}
\label{sec:vT1-statement}

The van Trees inequality
lower bounds the Bayesian squared error of any, possibly biased,
statistic $S : \cX \to \R$ for the estimation of a functional $\psi(\theta)$,
where we assume that $\psi$ is an absolutely continuous function,
with almost-everywhere derivative denoted by $\psi'$.
More precisely, denoting by $\E_\theta$ the expectation under $\P_\theta$,
the one-dimensional version of the van Trees inequality reads
\begin{equation}
\label{eq:vT1}
\tag{vT1}
\bigintsss_{\Theta} \E_\theta \Bigl[ \bigl(S-\psi(\theta)\bigr)^2 \Bigr] \,\d\Q(\theta)
\geq \frac{\displaystyle{\left( \int_{\Theta} \psi'(\theta) \, \d\Q(\theta) \right)^{\!\! 2}}}{\displaystyle{\IFQ + \int_{\Theta} \IFP(\theta) \,\d\Q(\theta)}}\,.
\end{equation}
Our version of the van Trees inequality requires two series of assumptions.
The first series, stated in Assumption~\ref{ass:def}
merely ensures that all quantities involved are defined and that
the inequality has a meaning.
The second series of assumptions are ``real'' assumptions and may
be found in Theorem~\ref{th:vT1}.

\begin{assumption}[ensuring definitions and meaning]
\label{ass:def}
The set $\Theta$ is any open subset of $\R$.
The probability measure $\Q$ is a well-behaved prior on $\Theta$.
The statistical model  $\cP = (\P_\theta)_{\theta \in \Theta}$
is dominated by a $\sigma$--finite measure~$\mu$, with densities $f_\theta=\d\P_\theta/\d\mu$
such that $(\theta,x) \mapsto f_\theta(x)$ is measurable.
The model $\cP$ is differentiable in $\L^2(\mu)$ almost everywhere on $\Theta \cap \Supp(q)$.
The function $\psi : \Theta \to \R$ is absolutely continuous.
Both $\psi^2$ and $\psi'$ are $\Q$--integrable and
\[
\int_{\Theta} \E_\theta\bigl[S^2\bigr] \,\d\Q(\theta) < +\infty\,, \ \
\int_{\Theta} \IFP(\theta) \,\d\Q(\theta) < +\infty\,.
\]
\end{assumption}

\begin{theorem}
\label{th:vT1}
The one-dimensional van Trees inequality~\eqref{eq:vT1}
holds with $\IFQ > 0$ under Assumption~\ref{ass:def} and the following
additional assumptions:
\begin{itemize}
\item for all $A \in \cF$,
the functions $\theta \in \Theta \cap \Supp(q) \mapsto \P_\theta(A)$ are absolutely continuous;
\item $q(\theta) \to 0$ as $\theta$ approaches any finite boundary point of $\Theta$.
\end{itemize}
The first assumption holds in particular if the model $\cP$
is differentiable in $\L^2(\mu)$ at \emph{all} points of $\Theta \cap \Supp(q)$,
not just almost everywhere.
\end{theorem}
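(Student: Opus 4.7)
The plan is to apply Cauchy--Schwarz on the product space $\Theta \times \cX$ equipped with the joint probability measure $q(\theta) f_\theta(x)\,\d\theta\,\d\mu(x)$, pairing $S(x) - \psi(\theta)$ with a ``joint score''
\[
L(\theta, x) = \frac{q'(\theta)}{q(\theta)}\bone_{\{q(\theta)>0\}} + \frac{2\,\dxi_\theta(x)}{\xi_\theta(x)}\bone_{\{\xi_\theta(x)>0\}}.
\]
The inequality~\eqref{eq:vT1} will then follow once the squared $\L^2$-norm of $L$ under this measure is identified with the denominator of~\eqref{eq:vT1} and the inner product of $L$ with $S - \psi$ with $\int \psi'\,\d\Q$.

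The squared norm of $L$ splits into three pieces. The crucial ingredient is the orthogonality relation $\int \xi_\theta \dxi_\theta\,\d\mu = 0$, valid at every point of $\L^2(\mu)$-differentiability of $\cP$; it comes from differentiating $\int \xi_\theta^2\,\d\mu = 1$ against the $\L^2$-derivative. This orthogonality kills the cross-term for $\Q$-a.e.~$\theta$, and the two diagonal pieces integrate to $\IFQ$ and $\int \IFP(\theta)\,\d\Q(\theta)$ respectively. Positivity of the denominator, equivalent to $\IFQ > 0$, follows because either $\Theta$ is bounded (and the boundary condition $q \to 0$ rules out constant $q$) or $\Theta$ is unbounded (and no constant density exists).

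Computing the inner product is the harder step, and here the reversed-order integration advertised in the introduction enters. Expand $L q f_\theta = q' f_\theta + 2 q \xi_\theta \dxi_\theta$ and write the inner product as $\int\int S\,L q f_\theta\,\d\mu\,\d\theta - \int\int \psi\,L q f_\theta\,\d\mu\,\d\theta$. In each piece I perform the $x$-integration first. The $\psi$-integral collapses, by the orthogonality above, to $\int \psi(\theta) q'(\theta)\,\d\theta$; the $S$-integral becomes $\int q'(\theta) h(\theta)\,\d\theta + \int q(\theta) g(\theta)\,\d\theta$, where $h(\theta) = \E_\theta[S]$ and $g(\theta) = 2\int S\, \xi_\theta \dxi_\theta\,\d\mu$. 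Integration by parts on $\psi q$---using absolute continuity of both factors, the vanishing of $q$ at finite endpoints, and $\Q$-integrability of $\psi^2$ and $\psi'$ to kill boundary contributions---rewrites the $\psi$-integral as $-\int \psi'(\theta) q(\theta)\,\d\theta$; a second integration by parts on $h q$ forces the $S$-integral to vanish, provided $h$ is absolutely continuous with $h' = g$ and its boundary terms likewise vanish. Subtracting yields an inner product equal to $\int \psi'\,\d\Q$, and Cauchy--Schwarz closes the argument.

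The main obstacle is establishing that $h$ is absolutely continuous on each connected component of $\Theta \cap \Supp(q)$ with $h' = g$ almost everywhere, and this is exactly where the assumption ``$\theta \mapsto \P_\theta(A)$ is absolutely continuous for every $A \in \cF$'' is used. For the indicator case $S = \bone_A$, absolute continuity of $h$ is directly assumed; the $\L^1(\mu)$-expansion $f_\theta - f_{\theta_0} = 2(\theta - \theta_0) \xi_{\theta_0} \dxi_{\theta_0} + o(|\theta - \theta_0|)$, itself a consequence of $\L^2$-differentiability, identifies the a.e.~derivative as $2\int_A \xi_\theta \dxi_\theta\,\d\mu$ on the set where the model is $\L^2$-differentiable. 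The extension to general $S$ with $\int \E_\theta[S^2]\,\d\Q < \infty$ proceeds by approximation with simple functions $S_n$ in $\L^2$ of the joint measure: Cauchy--Schwarz gives $\int q\,|g_n - g|\,\d\theta \to 0$, so passing to the limit in the identity $h_n(b) - h_n(a) = \int_a^b g_n\,\d\theta$ produces $h(b) - h(a) = \int_a^b g\,\d\theta$ on every compact subinterval of $\Supp(q)$. Boundary terms for $h q$ are then controlled by $q \to 0$ at finite endpoints combined with $h^2 \leq \E_\theta[S^2]$ (Jensen) and the $\Q$-integrability of $\E_\theta[S^2]$; the last observation of the theorem, concerning $\L^2$-differentiability everywhere, reduces to the pointwise bound $|\partial_\theta \P_\theta(A)| \leq \sqrt{\IFP(\theta)}$ obtained by Cauchy--Schwarz from the expansion above.
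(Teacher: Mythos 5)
Your architecture coincides with the paper's: your score $L$ multiplied by $\sqrt{q(\theta)f_\theta(x)}$ is exactly the function $2\Delta$ the paper builds, the orthogonality $\int \xi_\theta\dxi_\theta\,\d\mu=0$ plays the same role, the inner product is computed by integrating over $x$ first (via the $\L^1(\mu)$ expansion of $f_\theta$, which is precisely the paper's Lemma~\ref{lm:info-eq}) and then integrating by parts in $\theta$ on each connected component of $\Theta\cap\Supp(q)$, and the final remark of the theorem is obtained from the same bound $|\gamma_A'|\leq\sqrt{\IFP(\theta)}$ (you should add that local integrability of this bound on $\Theta\cap\Supp(q)$, needed to conclude absolute continuity from everywhere-differentiability, uses that $q$ is locally bounded below there together with $\int\IFP\,\d\Q<\infty$).

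There is, however, a genuine gap in your treatment of the boundary terms. The paper first truncates: it proves the inequality for $S$ a finite linear combination of indicators and for $\psi$ replaced by $\max\{-n,\min\{\psi,n\}\}$, then passes to the limit in the \emph{inequality}; with $\gamma_S$ and $\psi$ uniformly bounded, the products $\gamma_S q$ and $\psi q$ vanish at finite endpoints simply because $q$ does. You instead pass to the limit in the derivative identity $h(b)-h(a)=\int_a^b g$ and then integrate by parts with the possibly unbounded $h=\gamma_S$ and $\psi$. Your stated control --- ``$q\to0$ at finite endpoints combined with $h^2\leq\E_\theta[S^2]$ and $\Q$--integrability of $\E_\theta[S^2]$'' --- does not close as written: with $\phi=\E_\theta[S^2]\,q$ integrable one only gets $|h|q\leq\sqrt{\phi}\sqrt{q}$, and integrability of $\phi$ near a finite endpoint $b$ yields $\liminf\phi(\theta)(b-\theta)=0$, which must be combined with a \emph{quantitative} rate $q(\theta)=O(b-\theta)$ to conclude; the qualitative statement $q\to0$ is not enough (a profile like $q\sim\sqrt{b-\theta}$ would defeat the argument). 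The rate does hold here, but only because $\IFQ<\infty$ makes $\sqrt{q}$ $\tfrac12$--H\"older, an ingredient you never invoke. Moreover you do not address infinite endpoints at all, where $q$ need not tend to $0$ and one must select a common subsequence along which both $q$ and $\E_\theta[S^2]\,q$ (and $\psi^2 q$) tend to $0$, using that a sum of integrable functions has vanishing liminf at infinity --- this is exactly the ``letting $c\to a_\tau$ and $d\to b_\tau$ in a suitable manner'' step of the paper. Either supply these quantitative boundary estimates, or adopt the paper's order of operations and truncate $S$ and $\psi$ before integrating by parts.
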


\subsection{Comparison to classic regularity assumptions}
\label{sec:comp-classic}

We compare Theorem~\ref{th:vT1}
to the version under classic regularity assumptions
by \citet{GiLe95} based on \citet{vT68}.
With no loss of generality (on the contrary) and no change in their proof, we only replace their closed
interval $\Theta$ by any open set $\Theta$, possibly intersected
with $\Supp(q)$. The key additional assumption required is stated next.

\begin{assumption}[main regularity assumption]
\label{ass:ACf}
In the $\mu$--dominated model $\cP$,
the densities $f_\theta=\d\P_\theta/\d\mu$ are such that
for $\mu$--almost all $x$,
the function $\theta \in \Theta \cap \Supp(q) \mapsto f_\theta(x)$ is absolutely continuous,
with almost-everywhere derivative denoted by $f'_\theta(x)$.
\end{assumption}

In that setting with classic regularity assumptions, the Fisher information is
defined, where $f'_\theta$ exists, i.e., almost-everywhere, by
\[
\IFPalt(\theta) =
\bigintsss_\cX \biggl( \frac{f'_\theta}{f_\theta} \biggr)^{\! 2} \,\d\P_\theta
= \bigintsss_\cX \frac{(f'_\theta)^2}{f_\theta} \bone_{\{f_\theta > 0\}} \,\d\mu\,.
\]
A finite denominator in the right-hand side of the van Trees inequality
entails (see the argument in the last lines of
Section~\ref{sec:loc-int}) that $\IFPalt$ is locally integrable around each $\theta \in \Theta \cap \Supp(q)$,
and thus, that almost all points of $\Theta \cap \Supp(q)$ are Lebesgue points for $\IFPalt$.
Based on this and on Assumption~\ref{ass:ACf}, we apply a slight extension of \citet[Proposition~1]{Bic93}
or \citet[Theorem~12.2.1]{LehmannRomano2005}, whose proofs show that
continuity of $\IFPalt$ is actually not required and that a Lebesgue-point assumption is sufficient;
we obtain that the model $\cP$ is differentiable in $\L^2(\mu)$ almost everywhere on $\Theta \cap \Supp(q)$,
with $\L^2(\mu)$--derivatives given by $\dxi_\theta = f'_\theta \bone_{\{f_\theta > 0\}} / \sqrt{f_\theta}$.
We also have $\smash{\IFPalt} = \IFP$ almost everywhere on $\Theta \cap \Supp(q)$.

Now, \citet{GiLe95} prove the van Trees inequality
under the boundary conditions on $q$ and $q\psi$ stated in Theorem~\ref{th:vT1},
under Assumption~\ref{ass:ACf} and all of Assumption~\ref{ass:def}
except the almost everywhere $\L^2(\mu)$--differentiability of $\cP$.
The other condition in Theorem~\ref{th:vT1},
namely, that for all $A \in \cF$,
the function $\theta \in \Theta \cap \Supp(q) \mapsto \P_\theta(A)$ is absolutely continuous,
is a direct consequence of Assumption~\ref{ass:ACf}, by the Fubini--Tonelli theorem
and the characterization of absolute continuity in terms of equality to the integral of the derivative.
We therefore proved the following fact.

\begin{fact}
The regularity assumptions considered by \citet{GiLe95} to prove the
one-dimensional van Trees inequality~\eqref{eq:vT1}
are more stringent than the
H{\'a}jek--Le Cam-type assumptions considered in Theorem~\ref{th:vT1}.
\end{fact}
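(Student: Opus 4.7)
The plan is to show that any configuration satisfying the \citet{GiLe95} regularity assumptions---namely Assumption~\ref{ass:ACf}, together with all $\Q$-integrability conditions of Assumption~\ref{ass:def} except the almost-everywhere $\L^2(\mu)$-differentiability, and the two boundary conditions from Theorem~\ref{th:vT1}---automatically satisfies the full set of assumptions of Theorem~\ref{th:vT1}. Two items need to be verified beyond what is already granted: first, the almost-everywhere $\L^2(\mu)$-differentiability of $\cP$ on $\Theta \cap \Supp(q)$, which is the only piece of Assumption~\ref{ass:def} not directly assumed by \citet{GiLe95}; second, the absolute continuity of $\theta \mapsto \P_\theta(A)$ for every $A \in \cF$, which appears as the first bullet of Theorem~\ref{th:vT1}.

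For the first item, the strategy is to note that for the van Trees inequality to be informative, the finiteness $\int_\Theta \IFPalt(\theta)\,\d\Q(\theta) < \infty$ must hold. Since $q$ is a probability density, this forces $\IFPalt$ to be locally Lebesgue-integrable around almost every $\theta \in \Theta \cap \Supp(q)$, so that the Lebesgue differentiation theorem yields that almost every such $\theta_0$ is a Lebesgue point of $\IFPalt$. At each such $\theta_0$, I would invoke a mild sharpening of \citet[Proposition~1]{Bic93} / \citet[Theorem~12.2.1]{LehmannRomano2005}: inspection of those proofs reveals that the full continuity of $\IFPalt$ at $\theta_0$ is never actually used, only that $\theta_0$ is a Lebesgue point of $\IFPalt$. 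One thus concludes differentiability in $\L^2(\mu)$ at $\theta_0$, with $\dxi_{\theta_0} = f'_{\theta_0}\bone_{\{f_{\theta_0}>0\}}/(2\sqrt{f_{\theta_0}})$, and incidentally $\IFPalt = \IFP$ almost everywhere on $\Theta \cap \Supp(q)$.

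For the second item, fix $A \in \cF$ and write $\P_\theta(A) = \int_A f_\theta\,\d\mu$. Under Assumption~\ref{ass:ACf}, for $\mu$-almost every $x$, the map $\theta \mapsto f_\theta(x)$ is absolutely continuous on $\Theta \cap \Supp(q)$ and equals the integral of its almost-everywhere derivative $f'_\theta(x)$. A Fubini--Tonelli swap (justified by local integrability of $(\theta,x) \mapsto \lvert f'_\theta(x)\rvert$ over $[\theta_0,\theta_1]\times A$, itself a consequence of Cauchy--Schwarz applied to $\lvert f'_\theta\rvert = \lvert f'_\theta\rvert/\sqrt{f_\theta}\cdot \sqrt{f_\theta}$ combined with the local $\Q$-integrability of $\IFPalt$) then expresses $\P_{\theta_1}(A)-\P_{\theta_0}(A)$ as the integral of $\theta \mapsto \int_A f'_\theta\,\d\mu$ over $[\theta_0,\theta_1]$, which is the characterization of absolute continuity.

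The main obstacle is the relaxation of the classical continuity-of-Fisher-information hypothesis to a Lebesgue-point hypothesis in the Bickel / Lehmann--Romano statement. Once one inspects the argument and checks that the estimates controlling $\norm{\xi_\theta - \xi_{\theta_0} - (\theta-\theta_0)\dxi_{\theta_0}}_\mu$ only require averaging information against the Lebesgue measure rather than pointwise continuity, the remainder of the verification consists of routine Fubini--Tonelli and Cauchy--Schwarz manipulations.
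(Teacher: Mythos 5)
Your proposal is correct and follows essentially the same route as the paper: deduce almost-everywhere $\L^2(\mu)$-differentiability from local integrability of $\IFPalt$ (hence almost-everywhere Lebesgue points) via the sharpened Bickel / Lehmann--Romano statement, and obtain the absolute continuity of $\theta \mapsto \P_\theta(A)$ from Assumption~\ref{ass:ACf} by Fubini--Tonelli and the integral characterization of absolute continuity. The only differences are cosmetic (you even supply the missing factor $2$ in the formula for $\dxi_{\theta_0}$ and spell out the Cauchy--Schwarz justification of the Fubini swap, which the paper leaves implicit).
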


\subsection{Proof of Theorem~\ref{th:vT1}}
\label{sec:proof-vt1}

The key lemma for our approach and its proof are
extracted from the lecture notes by \citet{Pol01},
who adapted a result by~\citet[Lemma~7.2, page~67]{IbHa81}.
The lemma stated in \citet{Pol01} is actually stronger as it only requires
local boundedness of $T$ in $\L^2(\P_\theta)$ around~$\theta_0$.

\begin{lemma}[\citealp{Pol01}]
\label{lm:info-eq}
Let the $\mu$--dominated model $\cP = (\P_\theta)_{\theta \in \Theta}$ be differentiable in $\L_2(\mu)$ at $\theta_0$.
Consider a uniformly bounded statistic $T : \cX \to \R$, i.e., there exists $M > 0$ with $|T| \leq M$ $\mu$--a.s.
Then, $\gamma_T : \theta \in \Theta \mapsto \E_{\theta}[T]$
is differentiable at $\theta_0$, with derivative
\[
\gamma'_T(\theta_0) = 2 \int_\cX \dxi_{\theta_0} \xi_{\theta_0} T \, \d\mu\,.
\]
\end{lemma}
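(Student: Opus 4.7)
The plan is to start from the identity $\E_\theta[T]=\int_\cX T\xi_\theta^2\,\d\mu$, which holds since $f_\theta=\xi_\theta^2$, and use the factorization $\xi_\theta^2-\xi_{\theta_0}^2=(\xi_\theta-\xi_{\theta_0})(\xi_\theta+\xi_{\theta_0})$ to separate a difference-quotient factor (which will converge to $\dxi_{\theta_0}$ in $\L^2(\mu)$) from a sum factor (which will converge to $2\xi_{\theta_0}$, also in $\L^2(\mu)$). Concretely, for $\theta\ne\theta_0$ I would write
\[
\frac{\gamma_T(\theta)-\gamma_T(\theta_0)}{\theta-\theta_0}-2\int_\cX T\,\dxi_{\theta_0}\,\xi_{\theta_0}\,\d\mu
= \mathrm{(I)}+\mathrm{(II)},
\]
with
\[
\mathrm{(I)}=\int_\cX T\left(\frac{\xi_\theta-\xi_{\theta_0}}{\theta-\theta_0}-\dxi_{\theta_0}\right)(\xi_\theta+\xi_{\theta_0})\,\d\mu,
\quad
\mathrm{(II)}=\int_\cX T\,\dxi_{\theta_0}(\xi_\theta-\xi_{\theta_0})\,\d\mu.
\]

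Next, I would bound each term by Cauchy--Schwarz, using $|T|\leq M$. For (I), I get $|\mathrm{(I)}|\leq M\,\bnorm{(\xi_\theta-\xi_{\theta_0})/(\theta-\theta_0)-\dxi_{\theta_0}}_\mu\cdot\norm{\xi_\theta+\xi_{\theta_0}}_\mu$. The first norm tends to $0$ by the very definition of $\L^2(\mu)$--differentiability at $\theta_0$. The second norm is bounded by $\norm{\xi_\theta}_\mu+\norm{\xi_{\theta_0}}_\mu=2$, since $\xi_\theta,\xi_{\theta_0}$ are square roots of probability densities. For (II), I get $|\mathrm{(II)}|\leq M\,\norm{\dxi_{\theta_0}}_\mu\cdot\norm{\xi_\theta-\xi_{\theta_0}}_\mu$, and the last factor tends to $0$ because $\L^2(\mu)$--differentiability at $\theta_0$ implies $\L^2(\mu)$--continuity there. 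Hence both (I) and (II) vanish as $\theta\to\theta_0$, giving the claimed derivative.

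I do not foresee a genuine obstacle: the only subtleties are the harmless uses of $\norm{\xi_\theta}_\mu=1$ and of the fact that differentiability in $\L^2(\mu)$ entails continuity in $\L^2(\mu)$. The uniform boundedness of $T$ is precisely what allows Cauchy--Schwarz to convert the two $\L^2(\mu)$ convergences into convergence of the scalar integrals, with no further moment or regularity assumption on the model beyond the $\L^2(\mu)$--differentiability at the single point $\theta_0$.
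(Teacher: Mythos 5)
Your proof is correct and follows essentially the same route as the paper's: both expand $\xi_\theta^2-\xi_{\theta_0}^2$, exploit the boundedness of $T$, and control the error terms via the Cauchy--Schwarz inequality together with the $\L^2(\mu)$--differentiability at $\theta_0$ (and the normalizations $\norm{\xi_\theta}_\mu=1$). The only difference is cosmetic bookkeeping: you use the difference-of-squares factorization to get two error terms, while the paper expands $\bigl(\xi_{\theta_0}+(\theta-\theta_0)\dxi_{\theta_0}+r_\theta\bigr)^2$ with the remainder $r_\theta$ and controls four.
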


\begin{proof}
Let $r_\theta = \xi_\theta - \xi_{\theta_0} - (\theta - \theta_0) \dxi_{\theta_0}$, so that
\vspace{-.35cm}

\begin{multline*}
\overbrace{\bigl( \xi_{\theta_0} + (\theta - \theta_0) \dxi_{\theta_0} + r_\theta \bigr)^2}^{= \, \xi_\theta^2 \, = \, f_\theta} - \xi_{\theta_0}^2
- 2 (\theta-\theta_0) \dxi_{\theta_0} \xi_{\theta_0} \\
= (\theta - \theta_0)^2 \dxi_{\theta_0}^2 + r_\theta^2 + 2 r_\theta \xi_{\theta_0} + 2 (\theta - \theta_0) \dxi_{\theta_0} r_\theta
\end{multline*}
The $\L^1(\mu)$--norms of the first two terms in the right-hand side is of
order $(\theta - \theta_0)^2$. The $\L^1(\mu)$--norms of the last two terms above are (by the Cauchy--Schwarz
inquality) of order $\norm{r_\theta}_\mu$, thus are $o\bigl( |\theta - \theta_0| \bigr)$.
Multiplying both sides of the display above by the bounded $T$ and integrating over $\mu$, we obtain
\[
\gamma_T(\theta) - \gamma_T(\theta_0) - 2(\theta-\theta_0) \int_\cX \dxi_{\theta_0} \xi_{\theta_0} T \, \d\mu
= o\bigl( |\theta - \theta_0| \bigr)\,. \vspace{-.2cm}
\]

\end{proof}

\subsubsection{Overview of the proof.} We introduce
\[
\Delta : (x,\theta)
\longmapsto q'(\theta) \frac{\bone_{\{q(\theta) > 0\}}}{2\sqrt{q(\theta)}} \, \xi_\theta(x) + \sqrt{q(\theta)} \, \dxi_\theta(x)\,,
\]
which is well-defined for almost all $\theta \in \Theta \cap \Supp(q)$, and vanishes for $\theta \not\in \Supp(q)$.
Let $\Leb$ denote the Lebesgue measure.
We will show that
\begin{align}
\nonumber
2 \int_{\Theta \times \cX} \Delta(x,\theta) \, \sqrt{q(\theta)} \, \xi_\theta(x) \, \bigl( S(x) & - \psi(\theta) \bigr) \,\d\theta \, \d\mu(x)
\\
\label{eq:vT-infoeq}
& = \int_\Theta \psi'(\theta) \, \d \Q(\theta)\,.
\end{align}
We prove the equality~\eqref{eq:vT-infoeq} above in a direct way, and the van Trees inequality
then follows by an application of the Cauchy--Schwarz inequality.
Section~\ref{sec:CR-joint} explains that~\eqref{eq:vT-infoeq} can actually be interpreted, under
stronger assumptions, as a consequence of
Lemma~\ref{lm:info-eq} with $T(x,\theta) = S(x)-\psi(\theta)$ and a well-chosen location model.
Actually, a close look at the proof by \citet[page~61]{GiLe95} shows that
they also exactly prove~\eqref{eq:vT-infoeq}, though under additional
regularity assumptions, like the $\theta \mapsto f_\theta(x)$ being absolutely continuous,
and by first integrating in the left-hand side over $\theta$ then over $x$.
We take the reverse order and first integrate over $x$, thanks to
applications of Lemma~\ref{lm:info-eq}, and then over~$\theta$.

\subsubsection{Preparations.}
\label{sec:prep-vT1}
It suffices to prove~\eqref{eq:vT1} for statistics $S$ given by finite linear combinations
of indicator functions, the case of general statistics following by taking limits
given the bounded second moment stated in Assumption~\ref{ass:def}.
Similarly, the sequence of absolutely
continuous functions $\psi_n = \max\bigl\{-n,\min\{\psi,n\}\bigr\}$
satisfies $\psi_n \to \psi$ and $\psi'_n \to \psi'$ almost-surely;
by dominated convergence, it also suffices to prove~\eqref{eq:vT1}
for bounded $\psi$ with bounded derivatives.

The first assumption of Theorem~\ref{th:vT1} ensures that
the function $\gamma_S$ is absolutely continuous on $\Theta \cap \Supp$.
In addition, Lemma~\ref{lm:info-eq}, based on the fact that $\cP$ is differentiable
at almost all $\theta \in \Theta \cap \Supp(q)$ and that $S$ is in particular
uniformly bounded, provides a closed-form
expression for the almost-everywhere derivative $\gamma'_S$.

Finally, all integrands below belong to $\L^1(\Leb \otimes \mu)$,
as follows from applications of
the Cauchy--Schwarz inequality. Hence, integrals of sums equal sums of integrals and Fubini's theorem may be applied
to exchange orders of integration.
We use the short-hand notation $\mu[f]$ for the expectation of a function $f : \cX \to \R$ under~$\mu$.

\subsubsection{Proof of~\eqref{eq:vT-infoeq}.}
\label{eq:IPP-1dim}
Let $\Theta_q = \Theta \cap \Supp(q)$. The integrals
in~\eqref{eq:vT-infoeq} may be equivalently taken over $\Theta$ or $\Theta_q$.
The left-hand side of~\eqref{eq:vT-infoeq} consists of four terms,
namely,
\begin{align*}
\int_{\Theta_q} q'(\theta) \, \mu\bigl[f_\theta S\bigr] \,\d\theta
& = \int_{\Theta_q} q'(\theta) \, \gamma_S(\theta) \,\d\theta\,\\
- \int_{\Theta_q} \psi(\theta) \, q'(\theta) \, \mu\bigl[f_\theta\bigr] \,\d\theta
& = - \int_{\Theta_q} \psi(\theta) \, q'(\theta)\, \d\theta\,, \\
2 \int_{\Theta_q} q(\theta) \, \mu \bigl[\dxi_\theta \xi_\theta S\bigr] \,\d\theta
& = \int_{\Theta_q} q(\theta) \, \gamma'_S(\theta) \,\d\theta\,, \\
- 2 \int_{\Theta_q} \psi(\theta) \, q(\theta) \, \mu \bigl[\dxi_\theta \xi_\theta \bigr] \,\d\theta
& = 0\,.
\end{align*}
The fourth equality follows from Lemma~\ref{lm:info-eq} with $T \equiv 1$,
which entails that $\mu \bigl[\dxi_\theta \xi_\theta \bigr] = 0$
for almost all $\theta \in \Theta_q$.
Now, the functions $\gamma_S$, $q$ and $\psi$ are absolutely continuous on $\Theta_q$,
so that an integration by parts (\citealp[page 375, {\S}12.11]{Tit39}) ensures that
on any compact sub-interval $[c,d] \subset \Theta_q$,
\begin{align*}
\int_{[c,d]} \bigl( q'(\theta) \, \gamma_S(\theta) + q(\theta) \, \gamma'_S(\theta) \bigr) \,\d\theta
= \bigl[ q(\theta)\,\gamma_S(\theta) \bigr]_c^d\,, \\
\int_{[c,d]} \psi(\theta) \, q'(\theta)\, \d\theta = \bigl[ \psi(\theta) \,q(\theta) \bigr]_c^d - \int_{[c,d]} \psi'(\theta) \, q(\theta)\,\d\theta\,.
\end{align*}
We write $\Theta_q = \Theta \cap \Supp(q)$ as a countable union of disjoint intervals $(a_\tau, b_\tau)$, indexed by~$\tau \in \cT$.
Each finite boundary point of $\Theta_q$ is either a finite boundary point of $\Theta$,
or lies in the interior of $\Theta$ and is a finite boundary point of $\Supp(q)$;
in the latter case, by continuity,
$\psi$ is bounded and $q$ vanishes thereat.
Therefore, by boundedness of $\gamma_S$ and $\psi$ and by the $\Theta$--boundary assumptions on $q$,
the quantities $\gamma_S(\theta) \, q(\theta)$ and $\psi(\theta) \, q(\theta)$
vanish as $\theta$ approaches any finite boundary point $a_\tau$ or $b_\tau$ of~$\Theta_q$.
When $\pm \infty$ is a boundary point of $\Theta_q$, given that $q$ is integrable over $\Theta$,
the liminf of $q(\theta)$ is null as $\theta$ tends to $\pm \infty$.
Therefore, by boundedness of $\gamma_S$ and $\psi$ again,
for each $\tau$, by letting $c \to a_\tau$ and $d \to b_\tau$ in a suitable manner
and by dominated convergence, we have
\begin{align*}
\int_{(a_\tau,b_\tau)} \bigl( q'(\theta) \, \gamma_S(\theta) + q(\theta) \, \gamma'_S(\theta) \bigr) \,\d\theta
= 0\,, \\
\int_{(a_\tau,b_\tau)} \psi(\theta) \, q'(\theta)\, \d\theta = - \int_{(a_\tau,b_\tau)} \psi'(\theta) \, q(\theta)\,\d\theta\,.
\end{align*}
By dominated convergence, summing these inequalities over $\tau \in \cT$ yields \eqref{eq:vT-infoeq}.

\subsubsection{Conclusion by a Cauchy--Schwarz inequality.}
The van Trees inequality~\eqref{eq:vT1} follows by
applying the Cauchy--Schwarz inequality to~\eqref{eq:vT-infoeq}
together with the fact that
\[
4 \int_{\Theta \times \cX} \Delta(x,\theta)^2 \, \d\theta \d\mu(x) = \IFQ + \int_{\Theta} \IFP(\theta) \,\d\Q(\theta) \,.
\]
The equality above follows from the definitions of Fisher information (for
the integrals of square terms) and the fact that the following integral (corresponding to the cross term) is null,
since $\mu \bigl[\dxi_\theta \xi_\theta \bigr] = 0$ for almost all $\theta$, as already noted above:
\[
\int_{\Theta \times \cX} q'(\theta) \bone_{\{q(\theta) > 0\}} \, \dxi_\theta(x) \xi_\theta(x) \,\d\theta \, \d\mu(x) = 0\,.
\]
That $\IFQ > 0$ follows from the impossibility of $q$ to be a uniform distribution,
because of the vanishing-at-the-border constraints.
This concludes the proof of the first part of Theorem~\ref{th:vT1}
and we now move to its last statement.

\subsubsection{Special case.}
\label{sec:loc-int}
We finally show that when the model $\cP$ is $\L^2(\mu)$--differentiable
at all points of $\Theta_q$, not just almost everywhere,
the first assumption of Theorem~\ref{th:vT1} holds, namely, that
for all events $A\in {\cal F}$, the functions $\gamma_A : \theta\mapsto \P_{\theta}(A)$ are absolutely continuous
on $\Theta_q$. Indeed, by \citet[page 368, {\S}11.83]{Tit39},
it suffices to note that $\gamma_A$ is differentiable everywhere on $\Theta_q$
(by Lemma~\ref{lm:info-eq} together with the assumption that the
model $\cP$ is $\L^2(\mu)$--differentiable everywhere),
with a derivative $\gamma'_A$ that is finite everywhere and
locally integrable on $\Theta_q$: by the Cauchy--Schwarz inequality,
\[
\bigl| \gamma'_A(\theta) \bigr| = \left| 2 \int_\cX \dxi_{\theta} \xi_{\theta} \bone_A \, \d\mu \right|
\leq \sqrt{\IFP(\theta)} < +\infty\,.
\]
The claimed local integrability follows from the bound
above, the local integrability of $\IFP\,q$ (by
Assumption~\ref{ass:def}), and the fact that by absolute continuity,
$q \geq \delta$ for some $\delta > 0$ on
any open interval $(a,b) \subseteq \Theta_q$.

\subsection{The van Trees inequality as a Cram{\'e}r--Rao bound}
\label{sec:CR-joint}

The Cram{\'e}r--Rao bound (for possibly biased statistics~$T$)
is obtained as a corollary of Lemma~\ref{lm:info-eq}. By applying the Cauchy--Schwarz
inequality to the equality
\[
\gamma'_T(\theta_0) = 2 \int_\cX \dxi_{\theta_0} \xi_{\theta_0} T \, \d\mu\,,
\]
we get indeed, when $\IFP(\theta_0) > 0$,
\[
\E_{\theta_0}\bigl[T^2\bigr] \geq \frac{\bigl( \gamma'_T(\theta_0) \bigr)^2}{\IFP(\theta_0)}\,.
\]
Actually, replacing in the argument above $T$ by $T-c$, with $c = \E_{\theta_0}[T]$, yields the
desired Cram{\'e}r--Rao bound:
\[
\Var_{\theta_0}(T) = \E_{\theta_0}\bigl[(T-c)^2\bigr]
\geq \frac{\bigl( \gamma'_T(\theta_0) \bigr)^2}{\IFP(\theta_0)}\,.
\]

Now, the van Trees inequality was obtained in Section~\ref{sec:proof-vt1}
by an application of the Cauchy--Schwarz inequality to the equality~\eqref{eq:vT-infoeq},
which was claimed to be a consequence of Lemma~\ref{lm:info-eq}; this indicates
that the van Trees inequality is exactly an instance of a Cram{\'e}r--Rao bound (for the location model~$\cM$
described below), at least in the (slightly stronger) form of Corollary~\ref{cor:vT1} below.
The latter is an automatic improvement of Theorem~\ref{th:vT1},
as its proof merely consists of applying Theorem~\ref{th:vT1}
with $S-c$ and $\psi$ (or, alternatively, with $S$ and $\psi+c$)
for a well-chosen $c$.

\begin{corollary}
\label{cor:vT1}
Under the assumptions of Theorem~\ref{th:vT1}, we actually have the stronger lower bound
\begin{multline*}
\bigintsss_{\Theta} \E_\theta \Bigl[ \bigl(S-\psi(\theta)\bigr)^2 \Bigr] \,\d\Q(\theta) \geq \\
\left( \bigintsss_{\Theta} \E_\theta \bigl[ S-\psi(\theta) \bigr] \,\d\Q(\theta) \right)^{\!\! 2}
+
\frac{\displaystyle{\left( \int_{\Theta} \psi'(\theta) \, \d\Q(\theta)
\right)^{\!\! 2}}}{\displaystyle{\IFQ + \int_{\Theta} \IFP(\theta) \,\d\Q(\theta)}}\,.
\end{multline*}
\end{corollary}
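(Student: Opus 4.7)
The plan is to apply Theorem~\ref{th:vT1} with the shifted statistic $S - c$ in place of $S$, for an arbitrary real constant $c$, and then optimize over $c$. First I would check that $S - c$ still satisfies Assumption~\ref{ass:def}: since $(S-c)^2 \leq 2 S^2 + 2 c^2$ and $\Q$ is a probability measure, the finite second-moment condition is preserved; all other hypotheses of Theorem~\ref{th:vT1} (on $\IFP$, $\IFQ$, the well-behavedness of $\Q$, the absolute continuity of $\psi$, and the $\L^2(\mu)$-differentiability of $\cP$) do not involve $S$ and are therefore untouched.

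Introducing the shorthand
\[
B = \frac{\left(\displaystyle \int_\Theta \psi'(\theta)\,\d\Q(\theta)\right)^{\!\!2}}{\displaystyle \IFQ + \int_\Theta \IFP(\theta)\,\d\Q(\theta)}\,,
\qquad
b = \int_\Theta \E_\theta\bigl[S - \psi(\theta)\bigr]\,\d\Q(\theta)\,,
\]
I would then apply Theorem~\ref{th:vT1} to the pair $(S-c, \psi)$ to obtain
\[
\int_\Theta \E_\theta\bigl[(S - c - \psi(\theta))^2\bigr]\,\d\Q(\theta) \geq B\,,
\]
the right-hand side being unchanged because $\psi'$, $\IFP$, and $\IFQ$ do not depend on $c$. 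Expanding the square and integrating term by term (justified by Fubini, since the finite second moments make each piece $\Leb \otimes \mu$-integrable) yields, for every $c \in \R$,
\[
\int_\Theta \E_\theta\bigl[(S - \psi(\theta))^2\bigr]\,\d\Q(\theta) - 2\, c\, b + c^2 \geq B\,.
\]

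Rearranging, the bound reads $\int_\Theta \E_\theta\bigl[(S - \psi(\theta))^2\bigr]\,\d\Q(\theta) \geq -c^2 + 2 c b + B$ for every $c \in \R$. The right-hand side is a concave quadratic in $c$ maximized at $c^\star = b$, with optimal value $b^2 + B$, which is exactly the lower bound stated in Corollary~\ref{cor:vT1}. No real obstacle arises: the improvement is essentially free, reflecting the same completing-the-square device that, as already pointed out in Section~\ref{sec:CR-joint}, upgrades a second-moment lower bound into a bias-variance decomposition---the very step that turns the Cram{\'e}r--Rao bound on $\E_{\theta_0}[T^2]$ into the classical variance bound on $\Var_{\theta_0}(T)$.
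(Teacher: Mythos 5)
Your proof is correct and follows exactly the route the paper takes: the paper's own justification is that Corollary~\ref{cor:vT1} ``merely consists of applying Theorem~\ref{th:vT1} with $S-c$ and $\psi$ \ldots for a well-chosen $c$,'' and your choice $c^\star = \int_\Theta \E_\theta[S-\psi(\theta)]\,\d\Q(\theta)$ is precisely that well-chosen constant. The verification that $S-c$ still satisfies Assumption~\ref{ass:def} and the completing-the-square optimization are both sound.
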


Put differently, the van Trees inequality of Corollary~\ref{cor:vT1}
is not (only) to be understood as a Bayesian
Cram{\'e}r--Rao bound, as advocated by~\citet{GiLe95}, it is exactly a Cram{\'e}r--Rao bound.
Similarly, \citet[page~72]{vT68} underlines that he mimics the derivation of the Cram{\'e}r--Rao bound
to obtain his inequality, but does not see the latter as a very instance of the former.

We conclude this section by detailing our claim that
the equality~\eqref{eq:vT-infoeq} may be seen,
under suitable conditions (not required for our direct proof of Theorem~\ref{th:vT1}),
as a consequence of Lemma~\ref{lm:info-eq}.
We assume, in particular, that the support of $q$ is $\delta$--away from the border of $\Theta$, i.e.,
that for all $\theta \in \Theta$ with $q(\theta) > 0$ and all $x \in [-\delta,\delta]$, one has
$\theta + x \in \Theta$. This assumption ensures that
the location model $\cM = (\M_{\alpha})_{\alpha \in (-\delta,\delta)}$ is well defined, where
$\M_{\alpha}$ is the distribution over $\Theta \times \cX$ with density
$(x,\theta) \mapsto q(\theta+\alpha) f_{\theta+\alpha}(x)$ with respect to $\mu \otimes \Leb$.
Under suitable conditions (not detailed),
we may apply the same theorem as in Section~\ref{sec:comp-classic} (\citealp[Proposition~1]{Bic93}
or~\citealp[Theorem~12.2.1]{LehmannRomano2005})
establishing the $\L^2(\mu \otimes \Leb)$--differentiability of
$\M_{\alpha}$ at $\alpha_0=0$ and identifying its $\L^2(\mu \otimes \Leb)$--derivative at
$\alpha_0 = 0$, which we denote by $\Delta$,
with the pointwise derivative of
$(x,\theta) \mapsto \sqrt{q(\theta+\alpha) f_{\theta+\alpha}(x)}$ at $\alpha_0 = 0$:
\[
\Delta : (x,\theta)
\longmapsto q'(\theta) \frac{\bone_{\{q(\theta) > 0\}}}{2\sqrt{q(\theta)}} \, \xi_\theta(x) + \sqrt{q(\theta)} \, \dxi_\theta(x)\,.
\]
For a bounded statistic $S$ and an absolutely continuous and bounded target function $\psi$, whose
derivative $\psi'$ is also bounded,
we consider the statistic $J(x,\theta) = S(x) - \psi(\theta)$. Its expectation under
some $\M_{\alpha}$ equals
\begin{align*}
\lefteqn{\gamma_J(\alpha) = \E_{\M_\alpha}[J]} \\
& = \int_{\cX \times \Theta} \bigl( S(x) - \psi(\theta) \bigr) \,
q(\theta+\alpha) f_{\theta+\alpha}(x) \,\d\mu(x)\d\theta \\
& = \int_\Theta \E_{\theta}[S]\,q(\theta)\,\d\theta - \int_\Theta \psi(\theta-\alpha)\,q(\theta)\,\d\theta\,.
\end{align*}
Differentiating the above equality at $\alpha_0 = 0$, we obtain, as claimed,
the equality~\eqref{eq:vT-infoeq}, whose left-hand
side may be identified to $\gamma'_J(\alpha)$ thanks to
Lemma~\ref{lm:info-eq}, and whose right-hand side is obtained by differentiating under
the integral sign.

\section{Multivariate version}
\label{sec:multivar}

There exist several ways to extend the van Trees inequality
for multivariate estimation; see \citet{GiLe95}, who in turn refer
to \citet{vT68} and \citet{BMZ87}.
We focus here on the elegant matrix-wise version by~\citet{Let22}.

Let the statistical model $\cP = (\P_\theta)_{\theta \in \Theta}$ be indexed by an open
set $\Theta \subseteq \R^p$, where $p \geq 2$.
The estimation target will be some $\psi(\theta)$, where $\psi : \Theta \to \R^s$,
and we consider some statistic $S : \cX \to \R^s$ to that end.
We still assume that $\cP$ is dominated by a $\sigma$--finite measure~$\mu$, with densities $f_\theta=\d\P_\theta/\d\mu$
such that $(\theta,x) \mapsto f_\theta(x)$ is measurable.
In the sequel, $\norm{\cdot}$ refers to the Euclidean norm in some $\R^d$ space
(with $d = p$ or $d = s$),
and $\norm{\cdot}_\mu$ denotes the Euclidean norm in $\L^2(\mu)$, i.e.,
for a function $g : \cX \to \R^d$ in $\L^2(\mu)$,
\[
\norm{g}_\mu = \sqrt{\int_{\cX} \norm{g}^2 \,\d\mu}\,.
\]

\subsection{Comparison to classic regularity assumptions}

Both~\citet{GiLe95} and~\citet{Let22}
assume some smoothness on the functions $\theta \mapsto f_\theta(x)$,
for $\mu$--almost all $x$, and also possibly on the border
of $\Theta$. These assumptions are useful to
extend the integrations by parts performed in Section~\ref{eq:IPP-1dim}
to the multivariate case, via Stokes' theorem.
More precisely, \citet{Let22} assumes (this is what he calls a ``regular
Fisher model'') that the functions $\theta \mapsto f_\theta(x)$ are even
$C^1$--smooth but does not put any constraint on the boundary of
$\Theta$. \citet{GiLe95} assume, in particular, that $\Theta$
is compact with a piecewise-$C^1$--smooth boundary; as for the functions
$\theta \mapsto f_\theta(x)$, they assume that they are ``nice''
for $\mu$--almost all $x$ in the sense of Definition~\ref{def:nice}
(which is actually a property that Sobolev functions enjoy,
see \citealp[Section~4.9]{EvGa92}).
For $u = (u_1,\ldots,u_d) \in \R^d$ and $i \in \{1,\ldots,d\}$,
we let $u_{-i}$ denote the $(d-1)$--dimensional vector of all components of $u$ but the $i$--th one,
so that, by an abuse of notation, $u = (u_i,u_{-i})$.
We introduce the projection of a subset $D \subseteq \R^d$
ignoring the $i$--th coordinates:
\[
D_{-i} = \bigl\{ u_{-i} \in \R^{d-1} : \exists \, u_i \in \R \ \mbox{s.t.} \ (u_i,u_{-i}) \in D \bigr\}\,.
\]

\begin{definition}[nice functions]
\label{def:nice}
Let $D \subseteq \R^d$ be an open domain, where $d \geq 2$.
A function $\varphi : D \to \R$ is nice if for all $i \in \{1,\ldots,d\}$,
for almost all $u_{-i} \in D_{-i}$,
the functions $u_i \mapsto \varphi(u_i,u_{-i})$ are
absolutely continuous in the classic one-dimensional sense
on the open domain $D(u_{-i}) = \bigl\{ u_i \in \R : \ (u_i,u_{-i}) \in D \bigr\}$.

In particular, a function $\varphi : D \to \R$ that is nice
admits at almost all $u \in D$ partial derivatives along canonical
directions, which we denote by $\partial_1 \varphi, \ldots, \partial_d \varphi$.
By an abuse of notation, we denote by $\nabla \varphi =
(\partial_1 \varphi, \ldots, \partial_d \varphi)$ the vector of
partial derivatives.

A vector-valued function is nice if each of its component is nice.
\end{definition}

As in Section~\ref{eq:IPP-1dim}, we avoid
issuing regularity assumptions on the
functions $\theta \mapsto f_\theta(x)$
and replace them by $\L^2(\mu)$--differentiability
assumptions. Our version of the van Trees inequality
only requires such an $\L^2(\mu)$--differentiability
to hold along canonical directions, not all directions.
For the sake of a simpler exposition,
and as in the second part of Theorem~\ref{th:vT1},
we restrict our attention to a model that is
$\L^2(\mu)$--differentiable along canonical directions at all points.
We denote by $u \otimes v = u \, v^{\transp}$ the outer product of two vectors $u$ and $v$ (possibly of different lengths).

\begin{definition}[{Differentiability in $\L_2$ along canonical directions}]
\label{def:DMQ}
The $\mu$--{do\-mi\-na\-ted} statistical model $\cP$
indexed by an open subset $\Theta \subseteq \R^p$
is differentiable in $\L_2(\mu)$ at
$\theta_0 \in \Theta$ along canonical directions
if there exist scalar functions $\dxi_{\theta_0,1},
\ldots,\dxi_{\theta_0,p} \in \L_2(\mu)$, called the
$\L_2(\mu)$--partial derivatives of the model at $\theta_0$, such that,
for all $i \in \{1,\ldots,p\}$, as $\theta_i \to \theta_{0,i}$,
\[
\bnorm{\xi_{(\theta_i,\,\theta_{0,-i})} - \xi_{\theta_0} - (\theta_i - \theta_{0,i}) \dxi_{\theta_0,i}}_\mu =
o \bigl( |\theta_i - \theta_{0,i}| \bigr)\,.
\]
Let $\dxi_{\theta_0} = (\dxi_{\theta_0,1},\ldots,\dxi_{\theta_0,p})$.
The Fisher information $\IFP(\theta_0)$ of the model at $\theta_0$ is then defined as the
$p \times p$ matrix
\[
\IFP(\theta_0) = 4 \int_\cX \dxi_{\theta_0} \otimes \dxi_{\theta_0} \,\d\mu\,.
\]
\end{definition}

While we avoid at all costs direct
regularity assumptions on the
functions $\theta \mapsto f_\theta(x)$,
as we have no control on the model $\cP$,
we may be more lenient when it comes
to the prior $\Q$, which the statistician chooses.
\citet{GiLe95} impose, among others, the following
assumption on $\Q$, which generalizes Definition~\ref{def:well-behaved-1dim}.

\begin{definition}[Well-behaved prior, multivariate version]
We call a probability measure $\Q$ that concentrates on the open set~$\Theta \subseteq \R^p$ a well-behaved prior if
$\Q$ has a density $q$ with respect to the Lebesgue measure on $\Theta$
that is nice on $\Theta$, and whose vector of partial derivatives $\nabla q$
is such that $\Arrowvert \nabla q \Arrowvert^2_2 \, \bone_{\{q>0\}}/q$ is Lebesgue-integrable.
We define
\[
\IFQ \defeq \int_\Theta \nabla q (\theta) \otimes \nabla q (\theta) \, \frac{\bone_{\{q(\theta)>0\}}}{q(\theta)}\,\d\theta \,.
\]
\end{definition}

\subsection{Statement}

The multivariate version of the van Trees inequality
proposed by~\citet{Let22}, as well as a consequence thereof (in terms of
Schur complement) is stated in~\eqref{eq:vT-multi}. Therein,
where $M \succcurlyeq 0$ and $M \succ 0$ denote the fact that a symmetric matrix $M$ is
positive semi-definite and positive definite, respectively.
Also, $\nabla \psi(\theta)$ denotes the $p \times s$ matrix whose component
$(i,j)$ equals $\nabla \psi(\theta)_{i,j} = \partial_i \psi_j(\theta)$.

\begin{figure*}
\begin{equation}
\label{eq:vT-multi}
\tag{vTm}
\left[
\begin{array}{ccc}
\displaystyle{\int_{\Theta} \E_\theta \Bigl[ \bigl( S-\psi(\theta) \bigr) \otimes \bigl( S-\psi(\theta) \bigr) \Bigr] \, \d\Q(\theta)} & ~~ &
\displaystyle{\left( \int_{\Theta} \nabla{\psi}(\theta) \, \d\Q(\theta) \right)^{\! \transp}} \vspace{.125cm} \\
\displaystyle{\int_{\Theta} \nabla{\psi}(\theta) \, \d\Q(\theta)} & & \displaystyle{\IFQ + \int_{\Theta} \IFP(\theta) \, \d\Q(\theta)}
\end{array}
\right] \succcurlyeq 0\,,
\end{equation}
\flushleft thus, whenever $\IFQ \succ 0$,
\[
\int_{\Theta} \E_\theta \Bigl[ \bigl( S-\psi(\theta) \bigr) \otimes \bigl( S-\psi(\theta) \bigr) \Bigr] \, \d\Q(\theta)
- \left( \int_{\Theta} \nabla{\psi}(\theta) \, \d\Q(\theta) \right)^{\! \transp} \,
\left( \IFQ + \int_{\Theta} \IFP(\theta) \, \d\Q(\theta) \right)^{\! -1}
\left( \int_{\Theta} \nabla{\psi}(\theta) \, \d\Q(\theta) \right)
\succcurlyeq 0\,.
\]
\rule{\textwidth}{0.2pt}
\end{figure*}

The multivariate counterpart of Assumption~\ref{ass:def}
is stated next. It does not target generality
and aims to ease exposition: as a consequence,
it requires differentiability of the model at all points of $\Theta \cap \Supp(q)$,
not just almost everywhere, and also imposes that the density $q$ is continuous
(which does not follow from Definition~\ref{def:nice}).

\begin{assumption}[for the multivariate case]
\label{ass:def-multidim}
The set~$\Theta$ is any open subset of $\R^p$.
The probability measure~$\Q$ is a well-behaved prior on $\Theta$,
with a continuous density~$q$.
The statistical model $\cP = (\P_\theta)_{\theta \in \Theta}$
is dominated by a $\sigma$--finite measure~$\mu$, with densities $f_\theta=\d\P_\theta/\d\mu$
such that $(\theta,x) \mapsto f_\theta(x)$ is measurable.
The model $\cP$ is differentiable in $\L^2(\mu)$ along canonical
dimensions at \emph{all} points of on $\Theta \cap \Supp(q)$.
The function $\psi : \Theta \to \R^s$ is nice.
Both $\norm{\psi}^2$ and $\norm{\nabla \psi}$ are $\Q$--integrable and
both
\[
\int_{\Theta} \E_\theta\bigl[\norm{S}^2\bigr] \,\d\Q(\theta)\,, \
\int_{\Theta} \Tr\bigl(\IFP(\theta)\bigr) \,\d\Q(\theta) < +\infty\,,
\]
where $\Tr$ denotes the trace.
\end{assumption}

\begin{theorem}
\label{th:vT-multi}
The multivariate van Trees inequality~\eqref{eq:vT-multi}
holds with $\IFQ \succ 0$ under Assumption~\ref{ass:def-multidim}
and the fact that $q(\theta) \to 0$
as $\theta$ approaches any boundary point of $\Theta$ with finite norm
along some canonical direction.
\end{theorem}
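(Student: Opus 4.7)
The plan is to mirror the one-dimensional proof of Theorem~\ref{th:vT1}, but to replace its final Cauchy--Schwarz step with a Gram-matrix argument that automatically yields matrix positive semi-definiteness. I would introduce the $\R^p$--valued analog of $\Delta$,
\[
\Delta : (x,\theta) \longmapsto \nabla q(\theta)\,\frac{\bone_{\{q(\theta)>0\}}}{2\sqrt{q(\theta)}}\,\xi_\theta(x) + \sqrt{q(\theta)}\,\dxi_\theta(x)\,,
\]
and consider, on $\Theta \times \cX$ equipped with $\Leb \otimes \mu$, the stacked $\R^{s+p}$--valued function $h(x,\theta) = \bigl(\sqrt{q(\theta)}\,\xi_\theta(x)\,\bigl(S(x)-\psi(\theta)\bigr),\ 2\Delta(x,\theta)\bigr)$. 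The matrix $\int h \otimes h \,\d\theta\,\d\mu$ is a second-moment matrix, hence positive semi-definite; it then suffices to show that its four blocks match those of~\eqref{eq:vT-multi}.

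The diagonal blocks are routine. The upper-left block equals $\int \E_\theta\bigl[(S-\psi(\theta))\otimes(S-\psi(\theta))\bigr]\,\d\Q$ by a direct computation. For the lower-right block, expanding $\Delta \otimes \Delta$ produces two square terms that reproduce $\IFQ$ and $\int \IFP\,\d\Q$ by definition of these Fisher informations, together with a cross term that vanishes, because $\mu\bigl[\dxi_{\theta,i}\,\xi_\theta\bigr]=0$ at all $\theta \in \Theta \cap \Supp(q)$ by Lemma~\ref{lm:info-eq} applied with $T\equiv 1$ along each canonical direction.

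The main obstacle is the off-diagonal block, namely the multivariate analog of identity~\eqref{eq:vT-infoeq},
\[
2\int_{\Theta \times \cX} \Delta(x,\theta) \otimes \bigl(\sqrt{q(\theta)}\,\xi_\theta(x)\,(S(x)-\psi(\theta))\bigr)\,\d\theta\,\d\mu(x) = \int_\Theta \nabla\psi(\theta)\,\d\Q(\theta)\,.
\]
After expanding $\Delta$ and using that the previous cross-term vanishing kills the contribution involving $\psi_j(\theta)\,\mu[\dxi_{\theta,i}\xi_\theta]$, the $(i,j)$-entry reduces to
\[
\frac{1}{2}\int_{\Theta_q}\bigl(\partial_i q\,\cdot\,\gamma_{S_j} + q\,\cdot\,\partial_i \gamma_{S_j}\bigr)(\theta)\,\d\theta \;-\; \frac{1}{2}\int_{\Theta_q}\partial_i q(\theta)\,\psi_j(\theta)\,\d\theta\,,
\]
with $\gamma_{S_j}(\theta)=\E_\theta[S_j]$, $\Theta_q=\Theta\cap\Supp(q)$, and $\partial_i\gamma_{S_j}$ identified via Lemma~\ref{lm:info-eq} applied slicewise along the $i$-th canonical direction. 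I would then dispatch both brackets by Fubini--Tonelli followed by the one-dimensional integration by parts of Section~\ref{eq:IPP-1dim}, applied along almost every slice $\theta_i \mapsto \cdot\,(\theta_i,\theta_{-i})$. Niceness of $\psi$ (Assumption~\ref{ass:def-multidim}) and of $q$ (well-behaved prior) provide the required slicewise absolute continuity, while the boundary terms vanish on each connected component of every slice for the same three reasons invoked in the 1D proof: either the slice endpoint lies at a finite boundary point of $\Theta$ along direction $e_i$ (where $q\to 0$ by hypothesis), or at an interior zero of the continuous density $q$, or at $\pm\infty$ (handled by a $\Q$--integrability liminf argument).

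The preparatory reductions of Section~\ref{sec:prep-vT1} carry over verbatim: truncate $S$ and each component of $\psi$ to reduce to the bounded case, and use Cauchy--Schwarz to establish $\L^1(\Leb\otimes\mu)$--integrability of every integrand so that Fubini applies. Finally, $\IFQ \succ 0$ follows by contradiction: if $v^{\transp}\IFQ v=0$ for some nonzero $v\in\R^p$, then $v^{\transp}\nabla q = 0$ almost everywhere on $\{q>0\}$, so the continuous density $q$ is locally constant along direction $v$ throughout its open support; combined with $\Leb$--integrability of $q$ over $\Theta$ and the assumed vanishing of $q$ at finite boundary points of $\Theta$ along canonical directions, this forces $q\equiv 0$, contradicting $\Q$ being a probability measure.
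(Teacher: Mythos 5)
Your overall architecture is exactly the paper's: the same $\Delta$, the same stacked vector whose Gram matrix over $\Leb\otimes\mu$ is positive semi-definite, the block identification, the vanishing of cross terms via Lemma~\ref{lm:info-eq} with $T\equiv 1$ along each canonical direction, and a slicewise integration by parts for the off-diagonal block. There is, however, one genuine gap, and it is precisely the step the paper devotes Section~\ref{sec:IbP-multi} to. To integrate by parts the bracket $\partial_i q\,\gamma_{S_j}+q\,\partial_i\gamma_{S_j}$ along a slice $\theta_i\mapsto(\theta_i,\theta_{-i})$, you need $\theta_i\mapsto\gamma_{S_j}(\theta_i,\theta_{-i})$ to be absolutely continuous, i.e., you need $\gamma_{S_j}$ to be nice. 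You invoke niceness of $\psi$ and of $q$ as supplying ``the required slicewise absolute continuity,'' but $\gamma_{S_j}$ is the one function whose regularity you do not control a priori, and the everywhere-differentiability along the slice that Lemma~\ref{lm:info-eq} gives you does \emph{not} imply absolute continuity: an everywhere-differentiable function whose derivative fails to be locally integrable need not even be of bounded variation. The missing argument is the multivariate analog of Section~\ref{sec:loc-int}: bound $\bigl|\partial_i\gamma_{S_j}\bigr|\leq B\sqrt{\Tr\bigl(\IFP\bigr)}\leq B\bigl(1+\Tr(\IFP)\bigr)$ by Cauchy--Schwarz; use the hypothesis $\int_\Theta\Tr\bigl(\IFP(\theta)\bigr)\,\d\Q(\theta)<\infty$ and Fubini to get that $\theta_i\mapsto\Tr\bigl(\IFP\bigr)\,q$ is integrable on almost every slice; use the continuity of $q$ (this is exactly why Assumption~\ref{ass:def-multidim} imposes it) to bound $q$ below by some $\delta>0$ locally on $\Supp(q)$ and deduce that $\partial_i\gamma_{S_j}$ is locally integrable on almost every slice; then conclude by the criterion ``differentiable everywhere with finite, locally integrable derivative implies absolutely continuous'' already used in Section~\ref{sec:loc-int}. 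Without this, the integration by parts for the first bracket is unjustified, and the trace-integrability hypothesis of Assumption~\ref{ass:def-multidim} plays no role in your proof, which is a warning sign.

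A secondary, more minor point: your argument for $\IFQ\succ 0$ passes from $v^{\transp}\nabla q=0$ almost everywhere to ``$q$ is locally constant along the direction $v$.'' For a merely nice $q$ this inference is only licit when $v$ is a canonical direction, since niceness provides absolute continuity only along coordinate lines; for a general $v$ you control a linear combination of partial derivatives, not the variation of $q$ along the line $\theta_0+\R v$. (The paper is admittedly just as terse on this point.) Everything else---the reduction to bounded $S$ and $\psi$, the $\L^1(\Leb\otimes\mu)$ checks enabling Fubini, and the three-case analysis of the boundary terms on each connected component of each slice---matches the paper's proof.
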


\subsection{Proof of Theorem~\ref{th:vT-multi}}

Up to resorting to dominated-convergence arguments
(as in Section~\ref{sec:prep-vT1}),
we may restrict our attention to statistics $S$
and to target functions $\psi$ that are uniformly bounded.

\subsubsection{Elements to perform integration by parts.}
\label{sec:IbP-multi}
The key to extend the univariate proof to a multivariate setting
is the following lemma of integration by parts,
which follows from a version of Stokes' theorem
tailored to our needs. Its proof and some comments may be found in
appendix.

\begin{lemma}
\label{lm:IbP}
Let $D \subseteq \R^d$ be an open domain, where $d \geq 2$, and
let $f,\,g : D \to \R$ be two functions that are nice on $D$,
with $g$ also being continuous, such that,
for some $i \in \{1,\ldots,d\}$,
\[
\int_D | f\,g |\,\d\Leb < + \infty \ \ \mbox{and} \ \
\int_D | \partial_i f\,g + f\,\partial_i g|\,\d\Leb < + \infty\,,
\]
and $f(u)\,g(u) \to 0$ as $u$ approaches any boundary point of $D$ with finite norm
along the $i$--th canonical direction. Then
\[
\int_{D \cap \{ g \ne 0 \}} (\partial_i f \,g + f \, \partial_i g) \,\d\Leb = 0\,.
\]
\end{lemma}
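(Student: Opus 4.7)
The plan is to reduce the multivariate integration by parts to a one-dimensional statement on slices along the $i$-th canonical direction, and then to handle separately the two types of endpoints that appear on each slice (finite boundary points of $D$, where the hypothesis on $fg$ applies, and infinite endpoints, where integrability takes over). Since both $f$ and $g$ are nice, for almost every $u_{-i} \in D_{-i}$ the slice functions $u_i \mapsto f(u_i, u_{-i})$ and $u_i \mapsto g(u_i, u_{-i})$ are absolutely continuous on each compact subinterval of $D(u_{-i})$, with one-dimensional derivatives equal to $\partial_i f(\cdot, u_{-i})$ and $\partial_i g(\cdot, u_{-i})$ almost everywhere. The product $h(u_i) \defeq f(u_i, u_{-i})\,g(u_i, u_{-i})$ is therefore absolutely continuous on every compact subinterval of $D(u_{-i})$, and the product rule yields $h'(u_i) = (\partial_i f \, g + f \, \partial_i g)(u_i, u_{-i})$ for almost every $u_i$.

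Fubini--Tonelli applied to the two integrability hypotheses selects a full-measure set of $u_{-i} \in D_{-i}$ on which both $\int_{D(u_{-i})} |h|\,\d u_i$ and $\int_{D(u_{-i})} |h'|\,\d u_i$ are finite. I then write $D(u_{-i}) \subseteq \R$ as a countable disjoint union of open intervals $(a_\tau, b_\tau)$ and, on each component, apply the 1D fundamental theorem of calculus on a compact subinterval $[c, d] \subset (a_\tau, b_\tau)$:
\[
\int_c^d h'(u_i)\,\d u_i = h(d) - h(c)\,.
\]
The delicate step --- and what I expect to be the main obstacle --- is to let $c \to a_\tau^+$ and $d \to b_\tau^-$. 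If $a_\tau$ is finite, then by maximality of the component, $(a_\tau, u_{-i})$ lies on $\partial D$ with finite $i$-th coordinate, so the assumption on $fg$ forces $h(c) \to 0$. If instead $a_\tau = -\infty$, the finiteness of $\int_{D(u_{-i})} |h'|\,\d u_i$ together with absolute continuity forces $h$ to have a finite limit $L$ at $-\infty$, and the finiteness of $\int_{D(u_{-i})} |h|\,\d u_i$ then forces $L = 0$ (else $|h| \geq |L|/2$ on an infinite tail of the component, contradicting the integrability of $h$). The right endpoint is treated symmetrically, and dominated convergence on the right-hand side yields $\int_{(a_\tau, b_\tau)} h'\,\d u_i = 0$ for every $\tau$.

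Summing over the countable family of components --- legitimate because $h'$ is 1D-integrable on all of $D(u_{-i})$ --- gives $\int_{D(u_{-i})} (\partial_i f \, g + f \, \partial_i g)(u_i, u_{-i})\,\d u_i = 0$ for almost every $u_{-i}$, and a second application of Fubini yields $\int_D (\partial_i f \, g + f \, \partial_i g)\,\d\Leb = 0$. To pass from this to the claimed identity on $D \cap \{g \ne 0\}$, I note that $\partial_i f \cdot g$ vanishes identically on $\{g = 0\}$, and that for a nice function $g$ the partial derivative $\partial_i g$ vanishes almost everywhere on $\{g = 0\}$: this is the standard 1D fact that the derivative of an absolutely continuous function vanishes almost everywhere on its zero set, applied on almost every slice and reassembled via Fubini. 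Hence the integrand vanishes a.e.\ on $\{g = 0\}$, so the integrals over $D$ and over $D \cap \{g \ne 0\}$ agree, completing the argument.
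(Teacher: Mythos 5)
Your argument is correct. The slice-by-slice core---Fubini to reduce to one dimension, decomposition of each slice into maximal open intervals, the fundamental theorem of calculus on compact subintervals, the boundary hypothesis on $fg$ at finite endpoints and the integrability of $h$ and $h'$ at infinite ones---is essentially the paper's argument, which packages it as a standalone Stokes-type lemma for the single nice function $\varphi = fg$ (niceness being stable under products). Where you genuinely diverge is in handling the restriction to $\{g \ne 0\}$. The paper slices $D \cap \{g \ne 0\}$ directly, so that each finite endpoint of a component is either a boundary point of $D$ (covered by the hypothesis on $fg$) or an interior point of $D$ lying on the boundary of $\{g \ne 0\}$, where $g$ vanishes by continuity while $f$ is locally bounded on the slice; this is exactly where the continuity of $g$ enters. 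You instead establish the identity on all of $D$ and then observe that the integrand vanishes almost everywhere on $\{g = 0\}$, because $\partial_i g = 0$ a.e.\ on the zero set of $g$ (the classical level-set fact for a.e.-differentiable functions, applied slice-wise and reassembled by Fubini). Your route buys two things: the stronger conclusion $\int_D (\partial_i f\,g + f\,\partial_i g)\,\d\Leb = 0$, and the observation that the continuity of $g$ is actually superfluous for this lemma. Your treatment of infinite endpoints (existence of a finite limit of $h$ from the integrability of $h'$, then its nullity from the integrability of $h$) is also a touch cleaner than the paper's liminf-along-a-subsequence argument.
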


Denote by $\psi = (\psi_1,\ldots,\psi_s)$ and $S = (S_1,\ldots,S_s)$
the components of $\psi$ and $S$.
Given the assumptions of Theorem~\ref{th:vT-multi}
and the boundedness of $\psi$, we may directly apply Lemma~\ref{lm:IbP} to $D = \Theta$ and
the pairs $f = \psi_j$ and $g = q$, where $j \in \{1,\ldots,s\}$.

We wish to also do so with $D = \Theta \cap \Supp(q)$ and the $f = \gamma_{S_j}$, where
$\gamma_{S_j} : \theta \in \Theta \mapsto \E_\theta[S_j]$.
The boundary of $\Theta \cap \Supp(q)$ is included in the union of the boundaries
of $\Theta$ and $\Supp(q)$, and $q$ vanishes when it approaches any of them.
Together with the uniform boundedness of $S_j$, the boundary assumption
of Lemma~\ref{lm:IbP} is satisfied on $D = \Theta \cap \Supp(q)$.
It only remains to show that $f = \gamma_{S_j}$ is nice.
To do so, we mimic and adapt arguments used in Section~\ref{sec:loc-int}.
Given that $S_j$ is uniformly bounded, and
given the $\L^2(\mu)$--differentiability assumptions on the model,
we may apply Lemma~\ref{lm:info-eq} along any canonical direction
and get that the $\gamma_{S_j}$ are differentiable in the $i$--th coordinate at all
$\theta \in D$, with partial derivatives given by
\begin{equation}
\label{eq:lm:info-eq--cano}
\partial_i \gamma_{S_j}(\theta) = 2 \int \dxi_{\theta,i} \, \xi_{\theta,i} \, S_j \,\d\mu\,.
\end{equation}
Denoting by $B$ a uniform bound on the $S_j$,
the Cauchy--Schwarz inequality guarantees that
\[
\bigl| \partial_i \gamma_{S_j}(\theta) \bigr|
\leq B \sqrt{\Tr\bigl(\IFP(\theta)\bigr)}
\leq B \Bigl( 1 + \Tr\bigl(\IFP(\theta)\bigr) \Bigr)\,.
\]
Given the final integrability condition in Assumption~\ref{ass:def-multidim}
and the fact that $q$ is nice,
by Fubini's theorem, at almost all $\theta_{-i}$,
the function
\[
\theta_i \in D \mapsto
\Tr\bigl(\IFP(\theta_i,\theta_{-i})\bigr) \, q(\theta_i,\theta_{-i})
\]
is integrable and $\theta_i \in D \mapsto q(\theta_i,\theta_{-i})$
is (absolutely) continuous, thus locally larger than some $\delta > 0$;
recall indeed that $D = \Theta \cap \Supp(q)$ here.
Thus, $\theta_i \in D \mapsto
\Tr\bigl(\IFP(\theta_i,\theta_{-i})\bigr)$ is locally integrable.
Therefore,
at these $\theta_{-i}$, the function $\theta_i \in D(\theta_{-i}) \mapsto \gamma_{S_j}(\theta_i,\theta_{-i})$
is differentiable everywhere, with a derivative that is finite everywhere and locally integrable,
thus (see again \citealp[page 368, {\S}11.83]{Tit39}), it is absolutely continuous.
This exactly corresponds to the fact that $\gamma_{S_j}$ is nice on $D$.

\subsubsection{Brief rest of proof of Theorem~\ref{th:vT-multi}.}
We follow the same methodology as in Section~\ref{sec:proof-vt1},
and introduce
\[
\Delta(x,\theta) =
\nabla{q}(\theta) \frac{\bone_{\{ q(\theta) > 0 \}}}{2\sqrt{q(\theta)}} \, \xi_\theta(x)
+ \sqrt{q(\theta)} \,\, \dxi_\theta(x)\,.
\]
All integrands in the sequel belong to $\L^1(\Leb \otimes \mu)$,
as follows from applications of
the Cauchy--Schwarz inequality. Hence, integrals of sums equal sums of integrals and Fubini's theorem may be applied
to exchange orders of integration.
We use again the short-hand notation $\mu[f]$ for the expectation of a function $f : \cX \to \R^d$ under~$\mu$.

We show below that the
multivariate van Trees inequality~\eqref{eq:vT-multi}
corresponds to
\begin{multline*}
\int_{\cX \times \Theta} \bigl( V(x,\theta) \otimes V(x,\theta) \bigr)\,\d\mu(x)\d\theta \succcurlyeq 0\,, \\
\mbox{where} \ \
V(x,\theta) = \left[ \begin{array}{c} \!\! \bigr( S(x) - \psi(\theta) \bigl) \xi_\theta(x) \sqrt{q(\theta)} \! \vspace{.15cm} \\ 2\Delta(x,\theta) \end{array} \right].
\end{multline*}

We start with the cross-products.
As explained above,
Lemma~\ref{lm:info-eq} may be applied along all canonical directions $i \in \{1,\ldots,p\}$
to yield~\eqref{eq:lm:info-eq--cano} as well as
$\mu\bigl[\dxi_{\theta,i} \, \xi_{\theta,i}\bigr] = 0$
for all $\theta \in \Theta \cap \Supp(q)$. We therefore obtain
the following extension of the four equalities of the beginning
of Section~\ref{eq:IPP-1dim}:
with the short-hand notation $\Theta_q = \Theta \cap \Supp(q)$,
\begin{align*}
\nonumber
& 2 \!\! \bigintsss_{\cX \times \Theta}
\Bigl( \Delta(x,\theta) \otimes \bigl( S(x) - \psi(\theta) \bigr) \Bigr) \xi_\theta(x) \sqrt{q(\theta)} \, \d\mu(x) \d\theta \\
\nonumber
& = \phantom{-} \int_{\Theta_q} \nabla{q}(\theta) \otimes \gamma_S(\theta) \,\d\theta
+ \int_{\Theta_q} \nabla\gamma_S(\theta) \, q(\theta) \,\d\theta \\
& \phantom{=} - \int_{\Theta_q} \nabla{q}(\theta) \otimes \psi(\theta) \,\d\theta
+ (0,\ldots,0)^{\transp}\,,
\end{align*}
where $\gamma_S = (\gamma_{S_j})_{1 \leq j \leq s}$ and $\nabla\gamma_S(\theta)$
is the $p \times s$ matrix whose component $(i,j)$ equals
$\nabla\gamma_S(\theta)_{i,j} = \partial_i \gamma_{S_j}(\theta)$.
The results of Section~\ref{sec:IbP-multi}
hold for all pairs $(i,j)$ and thus guarantee that
\begin{align*}
\int_{\Theta_q} \nabla{q}(\theta) \otimes \gamma_S(\theta) \,\d\theta
= - \int_{\Theta_q} \nabla\gamma_S(\theta) \, q(\theta) \,\d\theta\,, \\
- \int_{\Theta_q} \nabla{q}(\theta) \otimes \psi(\theta) \,\d\theta
= \int_{\Theta_q} \nabla{\psi}(\theta) \, q(\theta) \,\d\theta\,.
\end{align*}

On the other hand, using again that $\mu\bigl[\dxi_{\theta,i} \, \xi_{\theta,i}\bigr] = 0$
for all $\theta \in \Theta_q$, we have that
\[
\int_{\Theta} \nabla{q}(\theta) \otimes
\mu\bigl[ \xi_\theta \dxi_\theta \bigr]\, \bone_{\{ q(\theta) > 0 \}} \, \d\theta
= (0,\ldots,0)^{\transp}\,,
\]
so that
the bottom-right term in the
multivariate van Trees inequality~\eqref{eq:vT-multi}
corresponds to
\begin{align*}
& 4 \int_{\cX \times \Theta} \bigl( \Delta(x,\theta) \otimes \Delta(x,\theta) \bigr) \, \d\mu(x) \d\theta \\
= \ \ & \phantom{+}
\int_{\Theta} \frac{\nabla{q}(\theta) \otimes \nabla{q}(\theta)}{q(\theta)}
\, \bone_{\{ q(\theta) > 0 \}} \, \mu[f_\theta] \, \d\theta \\
& + \int_{\Theta} 4 \, \mu[\dxi_\theta \otimes \dxi_\theta] \,  q(\theta) \,\d\theta \\
= \ \ & \IFQ + \int_{\Theta} \IFP(\theta) \,q(\theta)\,\d\theta\,,
\end{align*}
where $\IFQ \succ 0$ as $q$ cannot be a uniform density due to the boundary conditions.

\section{Direct proof of LAM lower bounds}
\label{sec:LAM-direct}

\citet[Section~3]{GiLe95} provide a derivation of a version of the the H{\'a}jek--Le Cam convolution theorem (\citealp{Haj70})
based on the van Trees inequality.
In the exact same vein, including the same techniques,
we propose a version of the H{\'a}jek--Le Cam local asymptotic minimax [LAM]
theorem (\citealp{Haj72}): see Theorem~\ref{th:LAM} below. We state it
in a H{\'a}jek--Le Cam spirit, avoiding any classic regularity assumption
(contrary to \citealp[Section~3]{GiLe95}).

Its derivation is elementary and bypasses the typical arguments 
of the H{\'a}jek--Le Cam theory of convergence of experiments.
However,
our version requires, on many aspects, stronger assumptions
than the original references, except for the differentiability
of the model, which we only require along canonical directions
(and not in all directions). See the comments after the statement of
Theorem~\ref{th:LAM} for more detail.

\subsubsection*{Setting.}
We still consider an open subset $\Theta \subseteq \R^p$.
For $n \geq 1$, we denote by $\P_\theta^{\otimes n}$ the law of a
$n$--sample of observations based on some $\P_\theta$, and
$\cP^{\otimes n} = (\P_\theta^{\otimes n})_{\theta \in \Theta}$
the associated statistical product model.
When the base statistical model
$\cP$ is differentiable in $\L_2(\mu)$ at some $\theta_0 \in \Theta$ along canonical directions,
then so is $\cP^{\otimes n}$, with a vector of $\L_2(\mu)$-partial derivatives given by
\[
(x_1,\ldots,x_n) \in \cX^n \mapsto \!\left(
\sum_{k=1}^n \dxi_{\theta_0,i}(x_k) \prod_{k' \ne k} \xi_{\theta_0}(x_{k'}) \!\! \right)_{\!\! 1 \leq i \leq p} \!.
\]
In particular, the Fisher information of the product model $\cP^{\otimes n}$ at $\theta_0$ equals
$\IFPn(\theta_0) = n \, \IFP(\theta_0)$.

Consider some sequence of statistics $S_n : \cX^n \to \R^s$
and fix for now some vector $U \in \R^s$.
We assume the following.

\begin{assumption}
\label{ass:LAM}
For a neighborhood $N$ of $\theta_0\in\Theta$,
on the one hand, $\cP$ is differentiable in $\L_2(\mu)$ along
canonical directions at all $\theta \in N$,
and on the other hand, the $\R^s$--valued target function $\psi$ is nice and bounded on $N$,
with $\nabla \psi$ also bounded on $N$.
\end{assumption}

\subsubsection*{Derivation.}
For any distribution $\H$ on $\R^p$,
we denote by $\Q_{\theta_0,r}$
the distribution of $\theta_0 + r H$, where $H$ is a random
variable with distribution~$\H$.
There exist sufficiently regular priors $\H$ on $\R^p$, with support
in the unit ball $\cB$, so that, for all $c > 0$, all assumptions of
Theorem~\ref{th:vT-multi} are satisfied with $\Q = \Q_{\theta_0,c/\sqrt{n}}$, at least
for $n$ large enough (depending on~$\H$ and~$c$),
except maybe the finiteness of the two integrals stated in Assumption~\ref{ass:def-multidim}
(without which the inequality holds also but is pointless).
Also, the Fisher information of $\Q_{\theta_0,c/\sqrt{n}}$
equals $(n/c^2)$ times the Fisher information $\IFQb$ of~$\Q_{\theta_0,1}$.

Therefore, for such priors and for $n$ large enough,
\begin{align*}
& \!\! \bigints_{\cB} \E^{\otimes n}_{\theta_{0}+ c h/\sqrt{n}} \! \left[ \! \Biggl(
U^{\transp} \! \biggl( S_{n} - \psi \Bigl( \theta_{0} + c h/\sqrt{n} \Bigr) \! \biggr) \! \Biggr)^{\!\! 2} \,
\right] \! \d\H(h) \\
& \geq \frac{1}{n} \, U^{\transp} G(\theta_0,c,n)^{\transp} \, I(\theta_0,c,n)^{-1} \, G(\theta_0,c,n) \, U\,,
\end{align*}
where we introduced the $p \times s$ and $p \times p$ matrices
\begin{align*}
G(\theta_0,c,n) & = \int_{\cB} \nabla{\psi}\bigl(\theta_{0}+ c h/\sqrt{n} \bigr) \,\d\H(h)\,, \\
I(\theta_0,c,n) & = \frac{1}{c^2} \, \IFQb + \int_{\cB} \IFP \bigl( \theta_{0}+ c h/\sqrt{n} \bigr) \,\d\H(h)\,.
\end{align*}

Now, any positive quadratic form $\ell : \R^s \to [0,+\infty)$ can be decomposed as follows:
there exists an orthogonal basis $U_1,\ldots,U_s$ of $\R^s$ and nonnegative real numbers $\lambda_1,\ldots,\lambda_s \geq 0$
such that for all $v \in \R^s$,
\[
\ell(v) = \sum_{k=1}^s \lambda_k \bigl( U_k^{\transp} \, v \bigr)^2 =
\sum_{k=1}^s \lambda_k \, U_k^{\transp} \, v\, v^{\transp} \, U_k\,.
\]
This decomposition entails that for all $s \times s$ symmetric positive semi-definite
matrices $\Gamma$, denoting by $\cN\bigl([0],\Gamma\bigr)$
the Gaussian distribution over $\R^s$ centered at $[0] = (0,\ldots,0)^{\transp}$
and with covariance matrix $\Gamma$,
\[
\int_{\R^s} \ell(v) \, \d\cN\bigl([0],\Gamma\bigr)(v) =
\sum_{k=1}^s \lambda_k \, U_k^{\transp} \, \Gamma \, U_k\,.
\]
Linear combinations of the applications above of the van Trees inequality
thus yield
\begin{align*}
& \!\! \bigints_{\cB} \E^{\otimes n}_{\theta_{0}+ c h/\sqrt{n}} \! \left[ \ell \Biggl( \!
\sqrt{n} \biggl( S_{n} - \psi \Bigl( \theta_{0} + c h/\sqrt{n} \Bigr) \! \biggr) \! \Biggr) \!
\right] \! \d\H(h) \\
& \geq \int_{\R^s} \ell(v) \, \d\cN\bigl([0], \Gamma_{\theta_0,c,n} \bigr)(v) \,,
\end{align*}
where $\Gamma_{\theta_0,c,n} = G(\theta_0,c,n)^{\transp} \, I(\theta_0,c,n)^{-1} \, G(\theta_0,c,n)$.
By lower bounding a supremum by an integral,
we obtain the desired LAM lower bound~\eqref{eq:LAM} below as soon as
$\Gamma_{\theta_0,c,n}$ converges in the following sense.
We recall that we do not aim for minimal assumptions in this section, but for elementary arguments.

\begin{assumption}
\label{ass:LAM2}
We have the component-wise convergence
\[
\lim_{c \to \infty} \ \lim_{n \to \infty} \Gamma_{\theta_0,c,n} = \nabla{\psi}(\theta_{0})^{\transp} \IFP(\theta_{0})^{-1} \,
\nabla{\psi}(\theta_{0})\,.
\]
It holds, in particular, as soon as
$\nabla \psi$ and $\IFP$ are continuous at $\theta_0$,
with $\IFP(\theta_0)$ being nonsingular.
\end{assumption}

\begin{theorem}
\label{th:LAM}
Under Assumptions~\ref{ass:LAM} and~\ref{ass:LAM2},
for all positive quadratic forms $\ell : \R^s \to [0,+\infty)$,
for all sequences of statistics $S_n : \cX^n \to \R^s$,
\begin{multline}
\label{eq:LAM}
\liminf_{c \to +\infty} \ \liminf_{n \to +\infty} \ \sup_{\theta : \sqrt{n} \Arrowvert \theta - \theta_0 \Arrowvert \leq c}
\E^{\otimes n}_{\theta} \! \biggl[ \ell \Bigr(
\sqrt{n} \bigl( S_{n} - \psi(\theta) \bigr) \! \Bigr) \!
\biggr]
\\
\geq \int_{\R^s} \ell(v) \, \d\cN\bigl([0], \nabla{\psi}(\theta_{0})^{\transp} \IFP(\theta_{0})^{-1} \,
\nabla{\psi}(\theta_{0}) \bigr)(v) \,.
\end{multline}
\end{theorem}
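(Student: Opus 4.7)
The \emph{Derivation} paragraph preceding the statement already lays out the blueprint, so my proof amounts to assembling three ingredients in order: (i)~apply the multivariate van Trees inequality of Theorem~\ref{th:vT-multi} to the product model $\cP^{\otimes n}$ with a prior shrinking at rate $1/\sqrt{n}$ around $\theta_0$; (ii)~combine the resulting directional bounds, via the eigendecomposition of $\ell$, into a bound involving a Gaussian integral; (iii)~bound the Bayesian risk by the supremum of the risk over the support of the prior and take iterated liminfs.

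For (i), I would fix once and for all a radial prior $\H$ on $\R^p$ whose density is $C^\infty$, supported on $\cB$, positive on the interior of $\cB$, and vanishing on $\partial \cB$; this makes $\H$ a well-behaved prior with $\IFQb$ finite and positive definite. For $n$ large enough that $\{\theta_0 + (c/\sqrt{n})h : h \in \cB\} \subseteq N$, the scaled prior $\Q_{\theta_0,c/\sqrt{n}}$ fulfils all hypotheses of Theorem~\ref{th:vT-multi} applied to $\cP^{\otimes n}$, with $\IFPn = n\, \IFP$ and Fisher information of the prior equal to $(n/c^2)\,\IFQb$; when one of the integrability conditions of Assumption~\ref{ass:def-multidim} fails, the LAM bound is vacuous and there is nothing to prove. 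Projecting the resulting matrix inequality onto a direction $U \in \R^s$ and multiplying through by $n$ yields
\[
\int_\cB \E^{\otimes n}_{\theta_0 + ch/\sqrt{n}}\!\left[\Bigl(U^{\transp} \sqrt{n}\bigl(S_n - \psi(\theta_0 + ch/\sqrt{n})\bigr)\Bigr)^{\!2}\right] \d\H(h) \geq U^{\transp}\, \Gamma_{\theta_0,c,n}\,U\,.
\]

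For (ii), the orthogonal eigendecomposition $\ell(v) = \sum_k \lambda_k (U_k^{\transp} v)^2$ with $\lambda_k \geq 0$ and the Gaussian identity $\int \ell\,\d\cN([0],\Gamma) = \sum_k \lambda_k U_k^{\transp} \Gamma U_k$ turn a nonnegative linear combination of the directional bounds above into
\[
\int_\cB \E^{\otimes n}_{\theta_0 + ch/\sqrt{n}}\!\Bigl[\ell\bigl(\sqrt{n}(S_n - \psi(\theta_0 + ch/\sqrt{n}))\bigr)\Bigr]\,\d\H(h) \geq \int_{\R^s} \ell(v)\,\d\cN\bigl([0],\Gamma_{\theta_0,c,n}\bigr)(v)\,.
\]
For (iii), since $\H$ concentrates on $\cB$, the left-hand side is bounded above by $\sup_{\sqrt{n}\norm{\theta - \theta_0}\leq c} \E^{\otimes n}_\theta\bigl[\ell(\sqrt{n}(S_n - \psi(\theta)))\bigr]$. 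Taking $\liminf_{n\to\infty}$ and then $\liminf_{c\to\infty}$ on both sides yields~\eqref{eq:LAM}.

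The only points requiring care are the construction of a prior $\H$ meeting all of the multivariate van Trees hypotheses simultaneously---any sufficiently smooth radial bump density does the job---and the passage to the limit on the right-hand side. The latter is painless: $\int \ell\,\d\cN([0],\Gamma)$ is a linear combination of the entries of $\Gamma$, so component-wise convergence of $\Gamma_{\theta_0,c,n}$, which is exactly the content of Assumption~\ref{ass:LAM2}, suffices. Theorem~\ref{th:vT-multi} has already carried out the heavy lifting, so no substantial obstacle remains.
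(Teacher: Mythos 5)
Your proposal is correct and follows exactly the route the paper itself takes in the ``Derivation'' paragraph: apply Theorem~\ref{th:vT-multi} to $\cP^{\otimes n}$ with the shrinking prior $\Q_{\theta_0,c/\sqrt{n}}$ built from a smooth compactly supported $\H$, aggregate the directional bounds through the eigendecomposition of $\ell$ and the Gaussian trace identity, bound the Bayes risk by the local supremum, and invoke Assumption~\ref{ass:LAM2} to pass to the limit. No gaps; the only loose ends (existence of a suitable bump prior, and the case where an integrability condition of Assumption~\ref{ass:def-multidim} fails) are handled at the same level of rigor as in the paper.
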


\subsubsection*{Comments.}
\citet[Theorem~8.11]{vdV98} states the lower bound~\eqref{eq:LAM}
for so-called bowl-shaped loss functions (not just quadratic forms),
under the $\L^2(\mu)$--differentiability of $\cP$ at $\theta_0$ (only,
not on a neighborhood thereof) in all directions (while
Theorem~\ref{th:LAM} considered canonical directions only),
and for $\psi$ differentiable at $\theta_0$ (in sharp contrast with the continuity and boundedness
assumptions on $\psi$ and $\IFP$ in Theorem~\ref{th:LAM}).
That Theorem~\ref{eq:LAM} may only deal with quadratic forms
is unsurprising, given the quadratic nature of the van Trees inequality.
But it came to us as a surprise that the results of Section~\ref{sec:multivar} and thus
Theorem~\ref{eq:LAM} hold for differentiability assumed only along canonical directions.

\begin{remark}
The non-singularity of $\IFP(\theta_0)$ in Assumption~\ref{ass:LAM2} is actually not required
to get a meaningful LAM bound from the van Trees inequality. 
We consider, for instance, the case of $\psi(\theta) = \theta$ and 
only assume that $\IFP$ is continuous at $\theta_0$: the $I(\theta_0,c,n)$ still
converge to $\IFP(\theta)$, which may however be singular.
Now, the proof above reveals that if $U \in \R^p$ is in the kernel of $\IFP(\theta)$,
then the LAM lower bound in~\eqref{eq:LAM} with $\ell(v) = (U^{\transp} v)^2$
equals $+\infty$. 
Conversely, still under the continuity assumption of $\IFP$ at $\theta_0$,
if there exists an estimator having a finite local asymptotic maximum in quadratic risk $\ell(v) = \norm{v}^2$, 
as in the left-hand side of~\eqref{eq:LAM},
then $\IFP(\theta_0)$ is non singular. This can be used to get a simple proof of the non singularity of the efficient Fisher information in semiparametric estimation problems: such an argument has been used in \citet{GaRoVe18} by applying a preliminary version of the 
proof of Theorem~\ref{th:LAM}.
\end{remark}

\begin{appendix}
\section*{Appendix: Proof of Lemma~\ref{lm:IbP}}

We consider the following version of Stokes' theorem,
where we use again the notation of Definition~\ref{def:nice}.
Lemma~\ref{lm:IbP} follows from it by considering
the set $\cS = \{g \ne 0 \}$ and the
product $\varphi = f g$, which is nice as absolute continuity in the classical sense is itself stable by products
(\citealp[page 375, {\S}12.11]{Tit39}).
By continuity of $g$, the set $\cS$ is open and $g$ vanishes at its boundary,
while $f$ (because it is nice) is such that
$f(\,\cdot\,,u_{-i})$ is locally bounded
for almost all $u_{-i} \in D_{-i}$.

\begin{lemma}
\label{lm:IPP}
Let $D \subseteq \R^d$ be an open domain.
Fix a nice function $\varphi : \Theta \to \R$
and $i \in \{1,\ldots,d\}$ such that
\[
\int_D |\varphi| \,\d\Leb < +\infty
\ \ \mbox{and} \ \
\int_D |\partial_i \varphi| \,\d\Leb < +\infty\,,
\]
and such that $\varphi(u)$ tends to 0 as $u$ approaches any boundary point of $D$ with finite norm
along the $i$--th canonical direction.
Consider an open subset $\cS$ such that for almost all $u_{-i} \in D_{-i}$,
one has $\varphi(u_i,u_{-i}) \to 0$ as $u_i$
approaches a boundary point of $\cS$ located in the interior of $D(u_{-i})$.
Then,
\[
\int_{D \cap \cS} \partial_i \varphi\,\d\Leb = 0\,.
\]
\end{lemma}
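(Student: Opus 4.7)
The plan is to reduce the $d$-dimensional statement to a one-dimensional statement by Fubini, and then apply the fundamental theorem of calculus on each connected component of the relevant slice.

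More precisely, I would first apply Fubini to the hypotheses $\int_D |\varphi|\,\d\Leb<\infty$ and $\int_D |\partial_i \varphi|\,\d\Leb<\infty$: for almost every $u_{-i}\in D_{-i}$, the one-dimensional functions $u_i\mapsto \varphi(u_i,u_{-i})$ and $u_i\mapsto \partial_i\varphi(u_i,u_{-i})$ are integrable on $D(u_{-i})$. Since $\varphi$ is nice, I may moreover assume that $u_i\mapsto \varphi(u_i,u_{-i})$ is absolutely continuous on $D(u_{-i})$ in the classical one-dimensional sense, with almost-everywhere derivative $\partial_i\varphi(\,\cdot\,,u_{-i})$; and since $\cS$ is open, $\cS(u_{-i})\defeq\{u_i:(u_i,u_{-i})\in\cS\}$ is an open subset of $\R$. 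Thus $D(u_{-i})\cap \cS(u_{-i})$ is open, and decomposes as a countable disjoint union of open intervals $(a_\tau,b_\tau)_{\tau\in\cT(u_{-i})}$.

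On each interval $(a_\tau,b_\tau)$, by absolute continuity, for any fixed $t_0\in(a_\tau,b_\tau)$ one has
\[
\varphi(u_i,u_{-i}) = \varphi(t_0,u_{-i}) + \int_{t_0}^{u_i} \partial_i\varphi(t,u_{-i})\,\d t\,.
\]
Since $\partial_i\varphi(\,\cdot\,,u_{-i})$ is integrable, this representation entails that $\varphi(u_i,u_{-i})$ admits limits $L_{a_\tau}$ and $L_{b_\tau}$ as $u_i\to a_\tau^+$ and $u_i\to b_\tau^-$ respectively, with
\[
\int_{a_\tau}^{b_\tau} \partial_i\varphi(u_i,u_{-i})\,\d u_i = L_{b_\tau} - L_{a_\tau}\,.
\]
It remains to show that $L_{a_\tau}=L_{b_\tau}=0$. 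Three cases arise for a finite endpoint: it either lies on the boundary of $D$, in which case the global boundary hypothesis forces the limit to be zero; or it lies in the interior of $D(u_{-i})$, in which case it must be a boundary point of $\cS$ and the hypothesis on $\cS$ forces the limit to be zero. For an infinite endpoint, integrability of $\varphi(\,\cdot\,,u_{-i})$ on $D(u_{-i})$, combined with the existence of the limit, forces that limit to be zero as well.

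Summing over $\tau$ (the series of limits is trivial since all terms are zero) yields $\int_{D(u_{-i})\cap \cS(u_{-i})}\partial_i\varphi(u_i,u_{-i})\,\d u_i = 0$ for almost every $u_{-i}\in D_{-i}$. A final application of Fubini gives $\int_{D\cap \cS}\partial_i\varphi\,\d\Leb = 0$, as desired. The main delicate point is the treatment of infinite endpoints, where one has to combine the absolute-continuity representation with integrability to promote the mere existence of a limit at $\pm\infty$ into its vanishing; the bookkeeping of the three possible kinds of endpoints (boundary of $D$, boundary of $\cS$ in the interior of a slice, or infinity) is the only nontrivial part of the argument.
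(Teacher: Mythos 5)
Your proof is correct and follows essentially the same route as the paper's: Fubini to reduce to one-dimensional slices, decomposition of the slice of $D\cap\cS$ into countably many open intervals, the fundamental theorem of calculus for absolutely continuous functions on each interval, and a case analysis of the endpoints (boundary of $D$, boundary of $\cS$ inside a slice, or infinite). Your handling of infinite endpoints is in fact slightly cleaner than the paper's: you use integrability of $\partial_i\varphi(\,\cdot\,,u_{-i})$ to show the limit of $\varphi$ actually exists and then integrability of $\varphi(\,\cdot\,,u_{-i})$ to conclude it is zero, whereas the paper only extracts a subsequence along which $q$ (here $\varphi$) tends to its null liminf.
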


\begin{proof}
We introduce $G = D \cap \cS$.
By Fubini's theorem, it suffices to show that for almost all $u_{-i} \in G_{-i}$,
\[
\int_{G(u_{-i})} \partial_i \varphi(u_i,u_{-i}) \,\d u_i = 0\,.
\]
Now almost all $u_{-i} \in D_{-i}$ are such that the following holds:
as $\varphi$ is nice on $D$,
$\varphi(\,\cdot\,,u_{-i})$ is absolutely continuous on the open
domains $D(u_{-i})$ and $G(u_{-i})$;
by Fubini's theorem,
\begin{align*}
& \int_{D(u_{-i})} \bigl| \varphi(u_i,u_{-i}) \bigr| \,\d u_i < +\infty \\
\mbox{and} \ \
& \int_{D(u_{-i})} \bigl| \partial_i \varphi(u_i,u_{-i}) \bigr| \,\d u_i < +\infty\,;
\end{align*}
by the $\cS$ boundary assumption, $\varphi(u_i,u_{-i}) \to 0$ as $u_i$
approaches a boundary point of $\cS$ located in the interior of $D(u_{-i})$.
We consider such a point $u_{-i} \in G_{-i}$
and mimic the one-dimensional arguments located in the second part
of Section~\ref{eq:IPP-1dim}. Namely,
we write $G(u_{-i})$ as an (at most) countable disjoint union of open intervals,
\[
G(u_{-i}) = \bigsqcup_{n \geq 1} \, \bigl( a_n(u_{-i}), \, b_n(u_{-i}) \bigr)\,,
\]
where $a_n(u_{-i}) \in \R \cup \{ -\infty \}$ and $b_n(u_{-i}) \in \R \cup \{ +\infty \}$.
By absolute continuity in the classical sense, for all $n \geq 1$,
for all real numbers $a > a_n(u_{-i})$ and $b < b_n(u_{-i})$,
\[
\int_a^b \partial_i{\varphi}(u_i,u_{-i}) \, \d u_i
= \varphi(b,u_{-i}) - \varphi(a,u_{-i})\,.
\]
The boundary of $G$ is included in union of the boundaries of $D$
and $\cS$.
The $D$ and $\cS$ boundary assumptions on $\varphi$ ensure
$\varphi(a,u_{-i}) \to 0$ and $\varphi(b,u_{-i}) \to 0$
as $a \to a_n(u_{-i})$ and $b \to b_n(u_{-i})$, except maybe in the cases where
$a_n(u_{-i}) = -\infty$ or $b_n(u_{-i}) = +\infty$.
In the latter cases, we use that by integrability of $\varphi(\,\cdot\,,u_{-i})$
over $D(u_{-i})$,
the liminf of this function must be null and let $a \to a_n(u_{-i})$ or $b \to b_n(u_{-i})$ in a careful way.
In all cases,
\[
\int_{a_n(u_{-i})}^{b_n(u_{-i})}
\partial_i{\varphi}(u_i,u_{-i}) \, \d u_i = 0
\]
and may sum the obtained equalities over $n \geq 1$, by dominated convergence,
to get the equality claimed at the beginning of this proof.
\end{proof}

\end{appendix}

\begin{acks}[Acknowledgments]
The authors would like to thank David Pollard for
suggesting to study the van Trees inequality under the angle
of a Cram{\'e}r--Rao bound for a location model,
and for following and encouraging this work since 2001,
when he delivered a series of lectures during the statistics semester
at Institut Henri Poincar{\'e}, Paris.
\end{acks}

\begin{funding}
Elisabeth Gassiat was supported by Institut Universitaire de France and by ANR grants
ANR-21-CE23-0035-02 and ANR-23-CE40-0018-02.
\end{funding}

\bibliographystyle{imsart-nameyear}
\bibliography{biblio-vT}

\end{document}